\begin{document}

\newtheorem{theorem}{Theorem}
\newtheorem{lemma}[theorem]{Lemma}
\newtheorem{claim}[theorem]{Claim}
\newtheorem{cor}[theorem]{Corollary}
\newtheorem{prop}[theorem]{Proposition}
\newtheorem{definition}{Definition}
\newtheorem{question}[theorem]{Question}
\newcommand{\hh}{{{\mathrm h}}}

\numberwithin{equation}{section}
\numberwithin{theorem}{section}
\numberwithin{table}{section}

\def\sssum{\mathop{\sum\!\sum\!\sum}}
\def\ssum{\mathop{\sum\ldots \sum}}
\def\iint{\mathop{\int\ldots \int}}

\newcommand{\supp}{\operatorname{supp}}

\def\squareforqed{\hbox{\rlap{$\sqcap$}$\sqcup$}}
\def\qed{\ifmmode\squareforqed\else{\unskip\nobreak\hfil
\penalty50\hskip1em \nobreak\hfil\squareforqed
\parfillskip=0pt\finalhyphendemerits=0\endgraf}\fi}

\newfont{\teneufm}{eufm10}
\newfont{\seveneufm}{eufm7}
\newfont{\fiveeufm}{eufm5}
%
%
\newfam\eufmfam
     \textfont\eufmfam=\teneufm
\scriptfont\eufmfam=\seveneufm
     \scriptscriptfont\eufmfam=\fiveeufm
%
%
\def\frak#1{{\fam\eufmfam\relax#1}}

\newcommand{\bflambda}{{\boldsymbol{\lambda}}}
\newcommand{\bfmu}{{\boldsymbol{\mu}}}
\newcommand{\bfxi}{{\boldsymbol{\xi}}}
\newcommand{\bfrho}{{\boldsymbol{\rho}}}

\def\eps{\varepsilon}

\def\fK{\mathfrak K}
\def\fT{\mathfrak{T}}

\def\fA{{\mathfrak A}}
\def\fB{{\mathfrak B}}
\def\fC{{\mathfrak C}}
\def\fM{{\mathfrak M}}
\def\fS{{\mathfrak  S}}
\def\fU{{\mathfrak U}}
\def\DAR{\cD(X;\cI,q)}
\def\DARp{\cD(X;\cI,p)}
\def\EAR{\cE(X;\cI,q)}
\def\EARp{\cE(X;\cI,p)}
\def\DaR{\cD^*(X;\cI,q)}
\def\EaR{\cE^*(X;\cI,q)}
\def\DaRp{\cD^*(X;\cI,p)}

\def\DAQ{\mathbf{D}(X;\cA, \cQ)}
\def\EAQ{\mathbf{E}(X;\cA, \cQ)}

\def\SXaq{S(X;a,q)}
\def\MXaq{\mathrm{M}(X;a,q)}
\def\RXaq{\mathrm{R}(X;a,q)}
\def\RXap{\mathrm{R}(X;a,p)}

\def\Kmnd{\cK_d(m,n)}
\def\Kmnp{\cK_p(m,n)}
\def\Kmnq{\cK_q(m,n)}

\def\SanIJd{\cS_d(\balpha, \bnu;\cI,\cJ)}
\def\SnIJd{\cS_d(\bnu;\cI,\cJ)}
\def\SIJd{\cS_d(\cI,\cJ)}
\def\SanIJp{\cS_p(\balpha, \bnu;\cI,\cJ)}
\def\SnIJp{\cS_p(\bnu;\cI,\cJ)}
\def\SIJp{\cS_p(\cI,\cJ)}
\def\SAMJp{\cS_p(\balpha;\cM,\cJ)}
\def\SAMJq{\cS_q(\balpha;\cM,\cJ)}
\def\SAJq{\cS_q(\balpha;\cJ)}
\def\SAIJp{\cS_p(\balpha;\cI,\cJ)}
\def\SAIJq{\cS_q(\balpha;\cI,\cJ)}

\def \balpha{\bm{\alpha}}
\def \bbeta{\bm{\beta}}
\def \bgamma{\bm{\gamma}}
\def \blambda{\bm{\lambda}}
\def \bchi{\bm{\chi}}
\def \bphi{\bm{\varphi}}
\def \bpsi{\bm{\psi}}
\def \bnu{\bm{\nu}}
\def \bomega{\bm{\omega}}

\def \bxi{\bm{\xi}}

\def\eqref#1{(\ref{#1})}

\def\vec#1{\mathbf{#1}}

\newcommand{\abs}[1]{\left| #1 \right|}

\def\Zq{\mathbb{Z}_q}
\def\Zqx{\mathbb{Z}_q^*}
\def\Zd{\mathbb{Z}_d}
\def\Zdx{\mathbb{Z}_d^*}
\def\Zf{\mathbb{Z}_f}
\def\Zfx{\mathbb{Z}_f^*}
\def\Zp{\mathbb{Z}_p}
\def\Zpx{\mathbb{Z}_p^*}
\def\M{\mathcal M}
\def\E{\mathcal E}
\def\cH{\mathcal H}

\def\cA{{\mathcal A}}
\def\cB{{\mathcal B}}
\def\cC{{\mathcal C}}
\def\cD{{\mathcal D}}
\def\cE{{\mathcal E}}
\def\cF{{\mathcal F}}
\def\cG{{\mathcal G}}
\def\cH{{\mathcal H}}
\def\cI{{\mathcal I}}
\def\cJ{{\mathcal J}}
\def\cK{{\mathcal K}}
\def\cL{{\mathcal L}}
\def\cM{{\mathcal M}}
\def\cN{{\mathcal N}}
\def\cO{{\mathcal O}}
\def\cP{{\mathcal P}}
\def\cQ{{\mathcal Q}}
\def\cR{{\mathcal R}}
\def\cS{{\mathcal S}}
\def\cT{{\mathcal T}}
\def\cU{{\mathcal U}}
\def\cV{{\mathcal V}}
\def\cW{{\mathcal W}}
\def\cX{{\mathcal X}}
\def\cY{{\mathcal Y}}
\def\cZ{{\mathcal Z}}
\newcommand{\rmod}[1]{\: \mbox{mod} \: #1}

\def\cg{{\mathcal g}}

\def\vr{\mathbf r}

\def\e{{\mathbf{\,e}}}
\def\ep{{\mathbf{\,e}}_p}
\def\eq{{\mathbf{\,e}}_q}

\def\Tr{{\mathrm{Tr}}}
\def\Nm{{\mathrm{Nm}}}

 \def\SS{{\mathbf{S}}}

\def\lcm{{\mathrm{lcm}}}

\def\({\left(}
\def\){\right)}
\def\l|{\left|}
\def\r|{\right|}
\def\fl#1{\left\lfloor#1\right\rfloor}
\def\rf#1{\left\lceil#1\right\rceil}
\def\sumstar#1{\mathop{\sum\vphantom|^{\!\!*}\,}_{#1}}

\def\mand{\qquad \mbox{and} \qquad}

\newcommand{\commI}[1]{\marginpar{%
\begin{color}{magenta}
\vskip-\baselineskip 
\raggedright\footnotesize
\itshape\hrule \smallskip I: #1\par\smallskip\hrule\end{color}}}

\newcommand{\commB}[1]{\marginpar{%
\begin{color}{blue}
\vskip-\baselineskip 
\raggedright\footnotesize
\itshape\hrule \smallskip B: #1\par\smallskip\hrule\end{color}}}




\hyphenation{re-pub-lished}

\mathsurround=1pt

\def\bfdefault{b}

\def \F{{\mathbb F}}
\def \K{{\mathbb K}}
\def \Z{{\mathbb Z}}
\def \Q{{\mathbb Q}}
\def \R{{\mathbb R}}
\def \C{{\mathbb C}}
\def\Fp{\F_p}
\def \fp{\Fp^*}

 \def \xbar{\overline x}


\title[Divisor function in  arithmetic progressions]{Bilinear sums of Kloosterman sums, 
multiplicative congruences and average values of  the  divisor function over families of  arithmetic progressions}

 \author[B. Kerr] {Bryce Kerr}
\address{Department of Pure Mathematics, University of New South Wales,
Sydney, NSW 2052, Australia}
\email{bryce.kerr89@gmail.com}

 \author[I. E. Shparlinski] {Igor E. Shparlinski}
 
\address{Department of Pure Mathematics, University of New South Wales,
Sydney, NSW 2052, Australia}
\email{igor.shparlinski@unsw.edu.au}

\begin{abstract} 
We obtain several  asymptotic formulas  for the sum of the divisor function $\tau(n)$ with $n \le x$ in  an arithmetic progressions   $n \equiv a \pmod q$  on average over $a$ from a set of several consecutive elements  from set of reduced residues modulo $q$ and on average over arbitrary sets.
The main goal is to obtain nontrivial result for $q \ge x^{2/3}$ with the small amount of averaging over $a$. 
We recall that for individual values of $a$ the limit  of our current methods is $q  \le x^{2/3-\varepsilon}$ 
for an arbitrary fixed $\varepsilon> 0$.  Our method builds on an approach due to Blomer~(2008) based on the Voronoi summation formula which we combine with some 
recent results on bilinear sums of  Kloosterman sums due Kowalski, Michel and 
Sawin~(2017) and Shparlinski~(2017). We also make use of extra applications of the Voronoi summation formulae after expanding into Kloosterman sums and this reduces the problem to estimating the number of solutions to multiplicative congruences. 
\end{abstract}

\keywords{sum of the divisor function, arithmetic progression,  bilinear sums of  Kloosterman sums}
\subjclass[2010]{Primary: 11N37, Secondary: 11L07}

\maketitle

\section{Introduction}

\subsection{Background}
Let 
$$\tau(n)=\sum_{d\mid n}1,$$
 denote the   {\it divisor function\/},  
 where the sum runs over all positive integral divisors $d$ of  an integer $n \ge 1$.
 
For integers $a$ and $q\ge 2$ with $\gcd(a,q)=1$, consider the
{\it divisor sum\/} given by:
$$
\SXaq=\sum_{\substack{n\le X\\n\equiv a\pmod q}}\tau(n).
$$

Several authors proved independently an asymptotic formula for 
$S(X,a,q)$ in the range $q\le X^{2/3-\varepsilon}$ with an arbitrary 
fixed $\varepsilon>0$, see discussions and proofs in~\cite{Blom,Hool,PoVau}.

To formulate these results more precisely, we need to introduce some notation. 
Namely, we define the  polynomial
$$
P(T;q, a) = \sum_{d\mid q} \frac{r_d(a)}{d}\(T -2 \log d +2\gamma-1\), 
$$
where $\gamma$ is the {\it Euler-Mascheroni constant\/}
$$
r_d(a) = \sum_{e\mid\gcd(a,d)} e \mu(d/e),
$$
is the {\it Ramanujan sum\/}, 
and  $\mu(k)$ is the  {\it M{\"o}bius function\/}.

We now define
$$
\MXaq  =  \frac{X}{q}  P(\log X;q, a), 
$$
which is the expected main term in the asymptotic formula for the sum $\SXaq$, and  thus we also 
define  the error term
$$
\RXaq = \SXaq  - \MXaq.
$$
It is useful to note that if $\gcd(a,q)=1$ then 
$$
\MXaq= 
\frac{\varphi(q) }{q^2}X \(\ln X+2\gamma-1\)
- \frac{2}q X\sum_{d\,|\,q}\frac{\mu(d)\ln d}d, 
$$
where  $\varphi(k)$ is the {\it Euler function\/}. 

Then,  uniformly over integers $a$ with $\gcd(a,q)=1$ we have the bound
\begin{equation}
\label{eq: Ind Bound}
\RXaq  \le  X^{1/3+o(1)}, 
\end{equation}
which given by Blomer~\cite[Equation~(2)]{Blom} (see also~\cite{FIK}) and  generalised to the case of arbitrary $\gcd(a,q)$ 
by Pongsriiam and  Vaughan~\cite[Theorem~1.1]{PoVau}.

Furthermore,  Blomer~\cite[Theorem~1.1]{Blom}, improving the previous result of
Banks,   Heath-Brown and  Shparlinski~\cite{BHBS}, has shown that 
\begin{equation}
\label{eq:  Av Bound a}
\sum_{a=0}^{q-1} \RXaq^2 \le X^{1+o(1)}, 
\end{equation}
which (as also the result of~\cite{BHBS}) is nontrivial in the essentially optimal range  
$q\le X^{1-\varepsilon}$ with an arbitrary fixed $\varepsilon>0$. 

With respect to a different kind of averaging, namely, over $q$ rather than over $a$, 
Fouvry~\cite[Corollary~5]{Fouv} has obtained the following bound: for any fixed $\varepsilon>0$
there exists some constant $c >0$ such that uniformly over integers $a$ with 
$|a|  \le \exp\(c (\log X )^{1/2}\)$ we have
\begin{equation}
\label{eq:  Av Bound q}
\sum_{\substack{X^{2/3+\varepsilon} \le q \le X^{1-\varepsilon}\\ \gcd(a,q)=1}} 
\left| \RXaq \right| =  O\( X \exp\(-c (\log X)^{1/2}\)\).
\end{equation}

We note that the summation in~\eqref{eq: Av Bound q}  can be extended to $q \le X^{2/3-\varepsilon}$, 
however the values of $q$ in the range $X^{2/3-\varepsilon} < q  <  X^{2/3+\varepsilon}$ have to be avoided. For a class of special moduli,  this gap in the range of $q$ has been bridged in~\cite{FIK}. 

\subsection{New set-up and results}

 Here we consider two apparently new questions,  which ``interpolate'' between obtaining individual bounds
 like~\eqref{eq: Ind Bound} and   bounds on average like~\eqref{eq: Av Bound a}. 
 Namely, given some subset $\cA\subseteq \Z_q^{*}$ of the reduced residues modulo $q$ we consider the sums
$$
 \cD(X;\cA,q) =  \sum_{a \in \cA} \left| R(X;a,q) \right| \quad \text{and}\quad
  \cE(X;\cA,q) =  \sum_{a \in \cA} R(X;a,q). 
$$
In particular, using~\eqref{eq:  Av Bound a} and the Cauchy-Schwarz inequality, we obtain
\begin{equation}
\label{eq: Cauchy}
| \cE(X;\cA,q)| \le \cD(X;\cA,q) \le A^{1/2}  X^{1/2+o(1)}, 
\end{equation}
which is nontrivial (that is, stronger than the trivial upper bound $AXq^{-1+o(1)}$) provided that $A\ge q^{2+\eps} X^{-1}$
for some fixed $\eps > 0$. Thus in the case $q \sim X^{2/3}$, this becomes 
 $A\ge q^{1/2+\eps}$.
 
 We are  interested in obtaining stronger bounds on $\cD(X;\cA,q)$ and $\cD(X;\cA,q)$ 
and especially which are 
nontrivial for  small values of $A$  and large values of $q$
(for example, when $A\le q^{1/2}$ and $q \ge X^{2/3}$).  

Our first bound considers the case when $\cA=\cI$ is an interval and depends on a result of  Kowalski, Michel and 
Sawin~\cite[Theorem~1.1]{KMS1},  and thus applies only to 
prime $q=p$.  In particular, as we are mostly interested in the values $q =p \ge X^{2/3}$, to simplify the  result we assume that $p \ge X^{4/7}$.  
 
 \begin{theorem}
\label{thm:DAR}  
For any integers $A$ and $X$, an interval $\cI$ of length $A$ and a prime $p$ with 
$$ A\le p  \mand X \ge p \ge X^{4/7},
$$ 
we have, 
\begin{align*}
\DARp  & \le  (AX^{1/2} p^{-1/2}     +A^{3/2} X^{1/2}  p^{-5/8} \\
&\qquad \qquad + A^{1/2} X^{1/2}  p^{-1/8} +  A^{5/6} X^{5/18} p^{11/72})p^{o(1)}. 
\end{align*}
\end{theorem}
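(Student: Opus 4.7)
The plan is to reduce $\DARp$ to a bilinear sum of Kloosterman sums that can be handled by \cite[Theorem~1.1]{KMS1} together with standard estimates. First, I would introduce signs $\eps_a \in \{-1,+1\}$ with $\eps_a = \operatorname{sgn} R(X;a,p)$ so that
$$
\DARp = \sum_{a \in \cI} \eps_a R(X;a,p),
$$
which reduces the problem to a signed sum. Since $p$ is prime, the main term $\MXaq$ is a fairly simple explicit function of $\gcd(a,p)$ (and thus constant on $\Z_p^*$), so isolating the contribution of reduced residues poses no difficulty.

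Next I would follow Blomer's approach and apply the Voronoi summation formula to $\SXaq$ for each individual $a$. For $q=p$ prime this expresses
$$
R(X;a,p) = \frac{1}{p}\sum_{n \ge 1} \tau(n)\, \operatorname{Kl}(\bar{a} n;p)\, \Phi(n/Y) + (\text{negligible terms}),
$$
where $\Phi$ is a smooth test function, $Y \asymp p^2/X$, and $\operatorname{Kl}(\cdot;p)$ denotes the normalised Kloosterman sum. Substituting back and reversing the order of summation gives
$$
\DARp \ll \frac{1}{p} \sum_{n \lesssim Y} \tau(n) \left| \sum_{a \in \cI} \eps_a \operatorname{Kl}(\bar a n;p) \right| + \text{lower order},
$$
which is a bilinear form in $(a,n)$ with Kloosterman kernel, exactly of the type amenable to the results of Kowalski--Michel--Sawin and Shparlinski.

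At this point I would split the sum according to the size of $n$ versus $A$, and apply the relevant bilinear estimates case by case. For intermediate ranges of $n$, the Kowalski--Michel--Sawin bound~\cite[Theorem~1.1]{KMS1} yields a saving of roughly $p^{-1/8}$ over the trivial bound when both variables exceed $p^\eps$. For shorter ranges of $n$ or longer ranges, one instead uses Cauchy--Schwarz in $n$ followed by the completion method and Weil's bound, producing the $AX^{1/2}p^{-1/2}$ and $A^{3/2}X^{1/2}p^{-5/8}$ terms; the Shparlinski bound for sums twisted by sparse sequences is used to produce the $A^{5/6}X^{5/18}p^{11/72}$ term, handling the worst ranges where $n$ is quite large. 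The factor $\tau(n)$ can be absorbed into $p^{o(1)}$ by standard estimates.

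The main obstacle is choosing the correct splitting of the $n$-sum and balancing the different bilinear estimates so that the four claimed error terms arise naturally. In particular, the KMS bound requires certain non-degeneracy and length conditions on both variables, so one must carefully track the ranges for which it is applicable versus the ranges where a cheaper completion argument or the Shparlinski bound wins. The constraint $p \ge X^{4/7}$ presumably enters precisely to ensure $Y = p^2/X$ is large enough that the dual variable $n$ reaches the range where the KMS bound becomes effective.
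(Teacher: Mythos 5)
Your skeleton matches the paper's: reduce $\DARp$ to a signed/unimodular-weighted bilinear sum via Blomer's Voronoi expansion and then invoke the Kowalski--Michel--Sawin bound. But there is a genuine gap in how you account for the four terms of the bound, and it is not a bookkeeping issue. In the paper the smoothing scale $Y$ of the weight $w$ is a \emph{free parameter}: the Voronoi identity carries an error $O\(A(Y/p+1)(Yp)^{\eps}\)$, the dual variable runs up to $V=p^2X^{1+\eps}Y^{-2}$ (not up to $p^2/X$, which is only the threshold $U$ where $u_p^{\pm}$ changes size), and the KMS bound applied on $[1,U]$ and on dyadic pieces of $(U,V]$ yields $AX^{1/2}p^{-1/2}+A^{13/16}X^{5/16}Y^{-1/8}p^{19/64}$ after inserting the weight bounds~\eqref{eq: u size}. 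The terms $A^{3/2}X^{1/2}p^{-5/8}$, $A^{1/2}X^{1/2}p^{-1/8}$ and $A^{5/6}X^{5/18}p^{11/72}$ are then exactly $AY/p$ for the three-fold maximum defining $Y$ in~\eqref{eq: Y large p}: the first two choices enforce the KMS length conditions $AV\le p^{5/4}$ and $V\le Ap^{1/4}$, and the third balances $A^{13/16}X^{5/16}Y^{-1/8}p^{19/64}$ against $AY/p$ (the exponents with denominators $18$ and $72$ come precisely from solving $Y^{9/8}=A^{-3/16}X^{5/16}p^{83/64}$). By fixing the dual length at $p^2/X$ and declaring the smoothing error ``negligible''/``lower order'', you have discarded the very quantity that produces three of the four terms, so your argument as written cannot reach the stated bound.

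Relatedly, your attribution of those terms to other devices is off: no Cauchy--Schwarz-plus-completion-plus-Weil step occurs (or is needed) in this proof, and the Shparlinski bound (Lemma~\ref{lem:SnIJd}) concerns sums \emph{without} weights in $a$ and is used only for Theorem~\ref{thm:EAR}; it plays no role in Theorem~\ref{thm:DAR} and would not produce an exponent pattern like $A^{5/6}X^{5/18}p^{11/72}$. Finally, the hypothesis $p\ge X^{4/7}$ does not ensure the dual range is long; it ensures it is not too short, namely $U=p^2/X\ge p^{1/4}$, so that the lower-bound condition $AN\ge p^{1/4}$ of Lemma~\ref{lem:SanIJp} holds --- this is one of the ``length conditions'' you flagged but left unresolved, and resolving them is exactly where the choice of $Y$ and this hypothesis do their work.
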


We now see that the bound of Theorem~\ref{thm:DAR} is nontrivial, 
that is, better than $AX/p$,  for 
$$
X  p^{-3/4-\varepsilon} \ge A \ge  
\max\left\{ X^{-13/3} p^{83/12+ \varepsilon}, X^{-1} p^{7/4+ \varepsilon}\right\} \quad 
\text{and} \quad
p \le X^{1-\varepsilon}, 
$$
for some fixed $\varepsilon > 0$. In particular at the critical value $p \sim X^{2/3}$
this condition becomes
$$
 p^{3/4-\varepsilon} \ge A \ge p^{5/12 + \varepsilon}.
$$

Clearly,    $ |\EAR| \le \DAR$, see~\eqref{eq: Cauchy}, 
 but in general we obtain  a stronger result for $\EAR$ which  does not follow from this trivial inequality and which also applies to composite moduli. 
 
 Again, as we are mostly interested in the values $q \ge X^{2/3}$, we make a 
simplifying  assumtion that $q \ge X^{19/31}$ (and note that $19/31 < 2/3$).

  \begin{theorem}
\label{thm:EAR} 
For any integers $A$, $X$ and $q$ with 
$$ A\le q  \mand X \ge q \ge X^{19/31},
$$ 
and any interval $\cI$ of length $A$ we have,
$$
|\EAR|  \le   \( A X^{1/2}   q^{-1/2 } + A^{1/8}  X^{1/4} q^{1/2} + A^{1/2}  X^{1/4} q^{1/4}  
\) q^{o(1)}.
$$
\end{theorem}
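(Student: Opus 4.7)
My plan is to apply the Voronoi summation formula twice, sandwiching a Fourier-completion step on the residue variable $a\in\cI$. Following Blomer~\cite{Blom}, the first Voronoi application gives an expansion of the shape
\[
R(X;a,q) \,\approx\, \frac{1}{q}\sum_{1\le m\le M}\tau(m)\,w(m)\,S(a,m;q),
\]
where $M\ll q^{2+o(1)}/X$, $w(m)$ is a smooth Bessel-type weight, and $S(a,m;q)=\sum_{x\bar x\equiv 1\,(q)}\eq(ax+m\bar x)$ is the standard Kloosterman sum. After summing over $a\in\cI$ and swapping orders, the problem becomes one of estimating $\sum_m\tau(m)\,w(m)\sum_{a\in\cI}S(a,m;q)$.

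Then I would complete the inner sum in $a$ using the finite Fourier expansion of $\mathbf{1}_{a\in\cI}$ modulo $q$. The key identity
\[
\sum_{a\pmod q}\eq(-ha)\,S(a,m;q)\;=\;\begin{cases}q\,\eq(m\bar h),&\gcd(h,q)=1,\\ 0,&\text{otherwise,}\end{cases}
\]
transforms the expression into
\[
\EAR\,\approx\,\frac{1}{q}\sum_{\substack{|h|\le q/2\\ \gcd(h,q)=1}}\widehat{\mathbf 1}_\cI(h)\sum_{m\le M}\tau(m)\,w(m)\,\eq(m\bar h),
\]
where $\widehat{\mathbf 1}_\cI(h)=\sum_{a\in\cI}\eq(ha)$ satisfies the standard bound $\widehat{\mathbf 1}_\cI(h)\ll\min(A,q/|h|)$. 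The crucial observation is that the inner $m$-sum is once again a smoothed $\tau$-sum twisted by an additive character of modulus $q$, so a \emph{second} Voronoi summation applies, dualising it to a sum over $n\ll X^{1+o(1)}/q$ with kernel involving further Kloosterman sums $S(\bar h,n;q)$; this is precisely the ``extra application of the Voronoi summation formulae'' mentioned in the abstract.

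Finally, I would bound the resulting double sum over $(h,n)$ by splitting into several regimes. A direct use of Weil's bound $|S(\bar h,n;q)|\ll q^{1/2+o(1)}$ combined with the tail estimate $\sum_h|\widehat{\mathbf 1}_\cI(h)|\ll q^{o(1)}\min(A\log q,q)$ should produce the first and third terms $AX^{1/2}q^{-1/2}$ and $A^{1/2}X^{1/4}q^{1/4}$; for the middle term $A^{1/8}X^{1/4}q^{1/2}$, I would instead apply Cauchy-Schwarz in $h$ and expand the square, reducing the estimate to counting solutions of a multiplicative congruence of the form $n_1 h_1\equiv n_2 h_2\pmod q$ with the variables in prescribed boxes. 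The main obstacle is precisely this last count: the exponent $A^{1/8}$ is too small to come from any divisor-bound shortcut and forces a sharp congruence-counting argument. The constraint $q\ge X^{19/31}$ is then what makes the truncation errors from both Voronoi applications fit inside the stated bound.
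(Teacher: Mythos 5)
There is a genuine gap, and it sits exactly where you stop: the exponent $A^{1/8}$. The paper's proof of Theorem~\ref{thm:EAR} is, after Blomer's expansion~\eqref{eq:RXaqComplete} (a single Voronoi application), quite short: one chooses $Y=\sqrt{qX^{1+\eps}}$, splits the $n$-range dyadically, bounds $\tau(n)u_d^\pm(n)$ via~\eqref{eq: u size}, and then invokes the bilinear bound of Lemma~\ref{lem:SnIJd} (from~\cite{Shp}), $|\SnIJd|\le N^{3/4}\(A^{1/8}d+A^{1/2}d^{3/4}\)d^{o(1)}$, for each block and each divisor $d\mid q$. The three terms of the theorem then come, respectively, from the truncation error $A(Y/q+1)$ with the optimised $Y$ and from the two terms $A^{1/8}d$ and $A^{1/2}d^{3/4}$ of that lemma. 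Your plan (complete in $a$, second Voronoi, Cauchy--Schwarz in $h$, count $n_1h_1\equiv n_2h_2\pmod q$) is in spirit an attempt to reprove such a bilinear bound inside the argument, but you never carry out the count, and as sketched it cannot give the stated exponent: a single Cauchy--Schwarz followed by a congruence count of the type in Lemma~\ref{lem:multcong} yields savings of the shape $A^{1/2}$, not $A^{1/8}$ --- you acknowledge this obstacle yourself. Since the middle term $A^{1/8}X^{1/4}q^{1/2}$ is precisely what the theorem asserts beyond trivial/Weil estimates, the proof is not complete without either citing the bound of~\cite{Shp} or supplying its (nontrivial) proof.

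Two further points would derail the sketch as written. First, the second Voronoi application is misdescribed: by Lemma~\ref{lem:PoissonSummationTau}, an additively twisted divisor sum $\sum_m\tau(m)w(m)\eq(\bar h m)$ dualises to another additively twisted divisor sum with twist $\eq(-hn)$, not to Kloosterman sums $\cK_q(\bar h,n)$; consequently, ``complete in $a$ then Voronoi again'' with no Cauchy--Schwarz in between is essentially circular, since summing $\widehat{\mathbf 1}_\cI(h)\eq(-hn)$ over $h$ just reconstructs the indicator of $\cI$ (the genuinely useful double-Voronoi manoeuvre in this paper occurs in the proof of Theorem~\ref{thm:DAR-small}, where it is combined with multiplicative characters via Lemma~\ref{lem:PoissonSummationTauMult} and the fourth-moment bound of Lemma~\ref{lem:4th-Moment}). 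Second, Theorem~\ref{thm:EAR} concerns composite $q$, and~\eqref{eq:RXaqComplete} produces Kloosterman sums to every modulus $d\mid q$, whereas your argument works only with the modulus $q$ itself; the divisor structure must be carried through (this is also why the paper uses Lemma~\ref{lem:SnIJd}, which holds for composite moduli, rather than the prime-modulus bounds). Finally, the attribution of the first and third terms to the Weil bound is inaccurate: the term $AX^{1/2}q^{-1/2}$ arises from optimising the smoothing parameter $Y$, and $A^{1/2}X^{1/4}q^{1/4}$ from the $A^{1/2}d^{3/4}$ part of Lemma~\ref{lem:SnIJd}, which already encodes cancellation beyond Weil applied termwise.
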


We now see that the bound of Theorem~\ref{thm:EAR} is nontrivial, 
that is, better that $AX/q$ for 
$$
A \ge \max\{  X^{-6/7} q^{12/7+ \varepsilon}, X^{-3/2}  q^{5/2+ \varepsilon}  \} \mand q \le X^{1-\varepsilon}, 
$$
for some fixed $\varepsilon > 0$. In particular, at  the critical value $q \sim X^{2/3}$
this condition becomes
$$
A \ge q^{3/7 + \varepsilon}.
$$
 
The proofs of the above estimates are based on an approach of Blomer~\cite{Blom} combined with some recent bounds on bilinear 
 sums of Kloosterman sums, see~\cite{BFKMM,FKM,KMS1,KMS2,Shp,ShpZha,Xi-FKM} for a variety 
 of such bounds.

 Our last result considers averaging over an arbitrary set $\cA$ and uses extra applications of the Voronoi summation formula to 
 reduce to estimating solutions to multiplicative congruences rather than Kloosterman sums. We use $\F_p$ to denote the field
 of $p$ elements. 
 
 \begin{theorem}
\label{thm:DAR-small} 
Let $p$ be prime and $\cA\subseteq \F_p^{*}$ be any set with cardinality $A$.
 For any integer $X\ge 1$ we have  
$$
 \cD(X;\cA,p) \le   
 \(A^{3/4}X^{1/4}p^{1/4}+A^{2/3}X^{1/3}\)X^{o(1)}. 
$$
\end{theorem}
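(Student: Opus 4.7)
The plan is to convert the estimate for $\cD(X;\cA,p)$ into a count of solutions to a multiplicative congruence modulo $p$, via two applications of the Voronoi summation formula. First, I would detect the condition $n\equiv a\pmod p$ by additive characters and apply Voronoi to the resulting additively twisted divisor sums over $n\le X$. This yields
\[
R(X;a,p) \;=\; \frac{1}{p^2}\sum_{m\ge 1}\tau(m)\,I(m)\,\cK(a,m;p) \;+\; O(\text{negligible}),
\]
where $\cK(a,m;p) = \sum_{h\in\F_p^{*}} e_p(-ah - \overline{h}m)$ is the standard Kloosterman sum and $I(m)$ is the Bessel integral produced by Voronoi, effectively supported on $m\le M:=p^{2+o(1)}/X$ with $|I(m)|\ll X$. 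Introducing signs $\varepsilon_a=\mathrm{sgn}\,R(X;a,p)$, setting $\Psi(h):=\sum_{a\in\cA}\varepsilon_a e_p(-ah)$ and $T(h):=\sum_m \tau(m)I(m)e_p(-\overline{h}m)$, opening the Kloosterman sum and swapping orders gives
\[
\cD(X;\cA,p) \;=\; \frac{1}{p^2}\sum_{h\in\F_p^{*}}\Psi(h)\,T(h) \;+\; (\text{negligible}).
\]

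The crucial step is to apply Voronoi summation a \emph{second} time, now to $T(h)$ itself. Since $I(m)$ is already the Bessel transform of the cut-off $\mathbf 1_{[0,X]}$, this converts $T(h)$ back into a short sum of length $X$, essentially $T(h)\approx p\sum_{n\le X}\tau(n)e_p(hn)+(\text{main-term adjustments})$. After substituting back, applying H\"older's inequality (with an appropriate exponent $k$) to decouple the $\Psi(h)$ and $T(h)$ factors, and expanding $|T(h)|^{2k}$ as a $2k$-fold sum, one invokes orthogonality of additive characters in $h$ and unwinds the definition of $\Psi(h)$ to reduce the estimate to a weighted count of tuples $(n_1,\dots,n_k,a_1,\dots,a_k)\in[1,X]^k\times\cA^k$ satisfying a multiplicative congruence of the shape
\[
a_1n_1\cdots a_j n_j \;\equiv\; a_{j+1}n_{j+1}\cdots a_k n_k \pmod p.
\]

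Such a count splits naturally into a diagonal contribution (where both sides coincide as integers, bounded by standard divisor-function estimates) and an off-diagonal contribution (bounded by the volume $\asymp A^k X^k/p$). Balancing these two against each other for suitable choices of $k$ produces the two terms $A^{3/4}X^{1/4}p^{1/4}$ (from the off-diagonal regime) and $A^{2/3}X^{1/3}$ (from the diagonal regime) in the final estimate.

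The main technical obstacle is the rigorous execution of the second Voronoi summation: the Bessel weights $I(m)$ must be dyadically decomposed so as to be treated as smooth functions on each piece, and all main-term contributions from the two Voronoi applications must be carefully tracked and shown to cancel or contribute only negligibly (otherwise one recovers only the tautological identity). A secondary difficulty lies in matching the H\"older exponent $k$ with the precise diagonal/off-diagonal split of the multiplicative-congruence count, so that the two contributions combine into the claimed bound rather than a weaker hybrid.
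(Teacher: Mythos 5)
Your overall skeleton (Blomer's Voronoi expansion into Kloosterman sums, opening the Kloosterman sum, a second application of Voronoi, and a reduction to multiplicative congruences with a diagonal/off-diagonal split) is indeed the strategy of the paper, but the crucial reduction step as you describe it does not work. Once H\"older decouples $\Psi(h)$ from $T(h)$, expanding $|T(h)|^{2k}$ and invoking orthogonality of \emph{additive} characters in $h$ can only produce additive constraints $n_1+\cdots+n_k\equiv n_{k+1}+\cdots+n_{2k}\pmod p$ (and if you do not decouple, the $h$-sum simply reconstitutes the tautological identity you yourself flag); it cannot produce a congruence of the shape $a_1n_1\cdots a_jn_j\equiv a_{j+1}n_{j+1}\cdots a_kn_k\pmod p$, and in any case a constraint coupling the $a_i$ with the $n_i$ cannot survive a step whose purpose is to separate the $a$-sum from the $n$-sum. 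In the paper the multiplicative structure enters by a different route: the $h$-summation produces the condition $n\equiv a\pmod p$, which is detected by \emph{multiplicative} characters via~\eqref{eq:orth}; the multiplicatively twisted Voronoi formula (Lemma~\ref{lem:PoissonSummationTauMult}) converts the dual divisor sum back to the original smooth dyadic one, and Cauchy--Schwarz over characters splits the estimate into $\sum_\chi\bigl|\sum_{a\in\cA}\vartheta(a)\overline\chi(a)\bigr|^2\le pA$ and a character fourth-moment count handled by Lemmas~\ref{lem:4th-Moment} and~\ref{lem:multcong}.

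Even granting your congruence, the count you assert is not available: your $a_i$ range over an \emph{arbitrary} set $\cA$, whereas Ayyad--Cochrane--Zheng-type bounds (Lemma~\ref{lem:multcong}) require all variables to run over intervals. For general $\cA$ the number of solutions of, say, $a_1n_1\equiv a_2n_2\pmod p$ with $n_i\le X$ is not ``volume plus integer-diagonal''; fixing the ratio $a_2/a_1$ gives only $O(A^2X^2/p+A^2X)$, and the extra factor of $A$ in the diagonal ruins the claimed exponents. The paper's arrangement is engineered precisely so that the four variables of the multiplicative congruence are the dyadic divisor variables of the dual sum (hence genuine intervals), while the arbitrary set $\cA$ enters only through character orthogonality. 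Finally, the attribution of the two terms is off in a way that matters: $A^{3/4}X^{1/4}p^{1/4}$ comes from the diagonal of the congruence count, while $A^{2/3}X^{1/3}$ comes from balancing the off-diagonal contribution against the smoothing error $AY/p$ of Blomer's formula by choosing $Y=X^{1/3}pA^{-1/3}$; your sketch fixes the dual length at $p^{2+o(1)}/X$ with no free smoothing parameter, so its Voronoi error term is already of the trivial size $AX/p$ and one of the two terms of the theorem cannot be recovered from your setup.
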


In particular, we see that Theorem~\ref{thm:DAR-small} provides an improvement over~\eqref{eq: Ind Bound} with trivial summation over $A$, 
that is, over $AX^{1/3+o(1)}$ once the condition
$$
p \le \min\{ A X^{1/3-\varepsilon}, X^{2/3-\varepsilon}\}
$$ 
is satisfied. 
Furthermore, it improves the bound~\eqref{eq: Cauchy} once 
$$
p \le X^{1-\varepsilon}/A. 
$$
Hence the above two conditions are equivalent to
$$
p \le \min\{ A X^{1/3-\varepsilon}, X^{1-\varepsilon}/A\}.
$$ 

We can now estimate the set of $a$ for which the bound~\eqref{eq: Ind Bound}
is almost tight. Namely for a fixed $\kappa>0$ we denote by $\cA_\kappa(X,p)$ the set 
of $a\in \F_p^*$ for which $\RXap \ge  X^{1/3 -\kappa}$. 

Since  $\cD(X;\cA_\kappa(X,p) ,p) \ge \# \cA_\kappa(X,p) X^{1/3 -\kappa}$, Theorem~\ref{thm:DAR-small} 
yields: 

 \begin{cor}
\label{cor:large R} 
Let $p$ be prime and $\cA\subseteq \F_p^{*}$ be any set with cardinality $A$.
 For any integer $X\ge 1$ we have  
$$
 \# \cA_\kappa(X,p) \le   \max\{ p  X^{-1/3+4 \kappa} ,  X^{3 \kappa}\}X^{o(1)}. 
$$
\end{cor}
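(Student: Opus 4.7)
The plan is extremely short since the statement is essentially a direct deduction from Theorem~\ref{thm:DAR-small}, as already hinted in the paragraph preceding the corollary. First I would set $N := \#\cA_\kappa(X,p)$ and use the defining inequality $|\RXap| \ge X^{1/3-\kappa}$ on $\cA_\kappa(X,p)$ to get the lower bound
$$
\cD(X;\cA_\kappa(X,p),p) = \sum_{a\in \cA_\kappa(X,p)} |\RXap| \ge N \cdot X^{1/3-\kappa}.
$$

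Next I would apply Theorem~\ref{thm:DAR-small} with $\cA = \cA_\kappa(X,p)$ (so that $A=N$) to obtain the upper bound
$$
N \cdot X^{1/3-\kappa} \le \bigl(N^{3/4}X^{1/4}p^{1/4} + N^{2/3}X^{1/3}\bigr) X^{o(1)}.
$$
Then I would split into two cases according to which of the two terms on the right dominates. If the first term dominates, rearranging $N \cdot X^{1/3-\kappa} \le N^{3/4}X^{1/4}p^{1/4} X^{o(1)}$ gives $N^{1/4} \le p^{1/4} X^{-1/12+\kappa+o(1)}$, hence $N \le p\, X^{-1/3+4\kappa+o(1)}$. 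If instead the second term dominates, $N \cdot X^{1/3-\kappa} \le N^{2/3}X^{1/3}X^{o(1)}$ gives $N^{1/3} \le X^{\kappa+o(1)}$, hence $N \le X^{3\kappa+o(1)}$. Taking the maximum of these two bounds yields the claimed estimate.

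There is really no hard step here; the only thing to check carefully is that the hypothesis $A \le p$ (or any other condition) implicit in Theorem~\ref{thm:DAR-small} is met. Inspection of the statement of Theorem~\ref{thm:DAR-small} shows no such restriction is imposed, so the deduction is unconditional and the corollary follows immediately.
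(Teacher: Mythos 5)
Your proposal is correct and is essentially identical to the paper's own (very brief) deduction: the paper likewise notes $\cD(X;\cA_\kappa(X,p),p)\ge \#\cA_\kappa(X,p)\,X^{1/3-\kappa}$ and then applies Theorem~\ref{thm:DAR-small} with $A=\#\cA_\kappa(X,p)$, which after the same case-by-case rearrangement gives the two terms $p\,X^{-1/3+4\kappa}$ and $X^{3\kappa}$. Your algebra in both cases checks out, so nothing further is needed.
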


\section{Preliminaries}

\subsection{Notation}
 Let  $q$ be  a positive 
integer. We denote the residue ring modulo $q$ by $\Z_q$ and denote the group
of units of $\Z_q$ by $\Z_q^*$. 

 For integers $d\ge 1$, $m$ and $n$ we define
the {\it Kloosterman sum\/}
$$
\Kmnd = \sum_{x \in \Z_q^*} \e_d\(mx +n\xbar \),
$$
where $\xbar$ is the multiplicative inverse of $x$ modulo $q$ and
$$
\e_d(z) = \exp(2 \pi i z/d).
$$

We use $\supp F$ to denote the support  of a real valued function $f$, that is, 
we have $F(x) \ne 0$ if and only if $x \in \supp F$. 

As usual, $\{ x\}$ denotes the fractional part of a real number $x$.

 We recall that the expressions
$U \ll V$, $V \gg U$ and $U=O(V)$ are all equivalent to the statement
that $|U|\le cV$ for some constant $c$. Throughout the paper, the implied constants in symbols ``$O$'',
``$\ll$'' and  ``$\gg$'' may occasionally, where obvious, depend on the
small positive parameter $\eps$ and integer parameters $k$ and $r$, and are absolute otherwise.

\subsection{Error terms and Kloosterman sums}

In this section, we collect some useful results that stem from the work of 
Blomer~\cite{Blom} and link bounds on $ \DAR$  and $\EAR$ to bounds 
on some  bilinear sums of Kloosterman sums. 

Fix some parameter 
\begin{equation}
\label{eq:Ypar}
1\le Y \le X/2
\end{equation}
 and a smooth compactly supported function $w(x)$ satisfying
$$
w(x)=\begin{cases}1& \text{if\ }  x\in [2Y,X], \\ 
0 & \text{if\ }  x\le Y \ \text{or \ }  x\ge X+Y, \end{cases}
$$
and for any integer $j\ge 1$
$$
w^{(j)}(x)\ll \frac{1}{Y^j}.
$$

We now recall the link between the error term $\RXaq$ and some bilinear sums of Kloosterman sums
given by  Blomer~\cite{Blom}. Namely by~\cite[Equations~(7) and~(8)]{Blom}, for any fixed $\eps > 0$ 
and a parameter 
$Y$ satisfying~\eqref{eq:Ypar},  we have
\begin{equation}
\begin{split}
\label{eq:RXaqComplete}
\RXaq  = \frac{1}{q} \sum_{d \mid q} \sum_\pm \sum_{n=1}^\infty \tau(n)  u_d^\pm(n) & \cK_d(\mp n, a)\\
& + O\((Y/q + 1) (Yq)^{\eps}\), 
\end{split}
\end{equation}
with the functions $u_d^{\pm}$ defined by 
$$
u_d^+(y)=\frac{4}{d}\int_{-\infty}^{\infty}w(x)K_0\left(\frac{4\pi (xy)^{1/2}}{d}\right)dx,
$$
and  
$$
u_d^-(y)=-\frac{2\pi}{d}\int_{-\infty}^{\infty}w(x)Y_0\left(\frac{4\pi (xy)^{1/2}}{d}\right)dx,
$$
where we use the standard notation for Bessel functions $K_0(x)$ and $Y_0(x)$
(we note that for typographical simplicity we have replaced the sum 
$\cK_d(\pm n, -a)$ with the equal sum $\cK_d(\mp n, a)$). 

We note that  in~\cite{Blom} a  slightly more complicated notation 
$\widecheck{w}^\pm(n)$  is used, however with the dependence on $d$ suppressed.  
In turn, we also suppress the dependence on $X$ in 
the notation for the functions $u_d^\pm(y)$, as well as we do for the following
quantities 
\begin{equation}
\label{eq: Ud Vd}
U(d) = d^2 X^{-1}  \mand  V(d) = d^2 X^{1+\eps} Y^{-2}. 
\end{equation}

We now recall,   by~\cite[Equations~(11)]{Blom}  we have  
\begin{equation}
\label{eq: u size}
u_d^\pm(n)  \ll 
 \begin{cases} X^{1+\eps} d^{-1}, & \text{if\ } n \le U(d), \\
 X^{1/4}  d^{1/2} n^{-3/4}, & \text{if\ }  U(d) < n \le V(d).
 \end{cases}
\end{equation}
Furthermore, 
\begin{equation}
\label{eq:ur3}
u_d^\pm(n) \ll \frac{1}{n^{c}}, \quad \text{if\ }   n>V(d),
\end{equation}
for any fixed   $c>0$, with the implied constant depending on $c$. 
In particular,  the contribution to $\RXaq$ from  $n \ge V(d)$ is negligible
and  we can limit the summation over $n$ up to $V(d)$ and absorb 
the difference in the already present error term.
We then have 
\begin{equation}
\begin{split}
\label{eq:Rerror}
R(X;a,q)= \frac{1}{q}\sum_{d \mid q}  \sum_{\pm}  \sum_{n=1}^{V(d)} &\tau(n)  u_d^\pm(n)  \cK_d(\mp n, a)\\
 &\qquad + O\((Y/q + 1) (Yq)^{\eps}\).
\end {split}
\end{equation}

Hence, changing the order of summations,  we derive from~\eqref{eq:Rerror}  that
\begin{equation}
\label{eq: DAB Dab}
 \DAR \le  \DaR + O\(A(Y/q + 1)(Yq)^{\eps}\),
 \end{equation}
 where 
$$
\DaR =  \frac{1}{q} \sum_{d \mid q} \sum_\pm  \sum_{a \in \cI}  
\left| \sum_{n=1}^{V(d)} \tau(n)  u_d^\pm(n)  \cK_d(\mp n, a)\right|.
$$
Hence  for some complex numbers $\alpha_{a,d}$ with $|\alpha_{a,d}|=1$  we
can write 
\begin{equation}
\label{eq: DaR}
\DaR =  \frac{1}{q} \sum_{d \mid q} \sum_\pm    \sum_{a \in \cI}  
\sum_{n=1}^{V(d)} \alpha_{a,d} \tau(n)  u_d^\pm(n)  \cK_d(\mp n, a).  
 \end{equation}
 
 For $\EAR$ there is no need to introduce the weights $\alpha_{a,d}$, 
 so we have 
 \begin{equation}
\label{eq: EAB Eab}
 |\EAR| \le  \EaR + O\(A(Y/q + 1)(Yq)^{\eps}\),
 \end{equation}
 where 
\begin{equation}
\label{eq: EaR}
\EaR =  \frac{1}{q} \sum_{d \mid q} \sum_\pm  \left|  \sum_{a \in \cI}  
\sum_{n=1}^{V(d)}  \tau(n)  u_d^\pm(n)  \cK_d(\mp n, a)\right|. 
 \end{equation}

\subsection{Bilinear sums of Kloosterman sums}

Given  two intervals
$$ 
\cI= \{B+1, \ldots,  B+A\}, \ \cJ =  \{M+1,   \ldots,  M+N\}\subseteq [1, q-1]
$$
of $A$ and  $N$ consecutive integers, respectively
and two  sequence of weights $\balpha = \{\alpha_a\}_{a\in \cI}$ and $\bnu = \{\nu_n\}_{n\in \cJ}$,
we define the following bilinear sums of Kloosterman sums
\begin{align*}
&\SanIJd = \sum_{a\in \cI} \sum_{n \in \cJ} \alpha_a   \nu_n \cK_d(n,a),\\
&\SnIJd  = \sum_{a\in \cI} \sum_{n \in \cJ} \nu_n \cK_d(n,a).
\end{align*}

We now collect some bounds on these sums  
slightly simplifying and adjusting them to our notation; more bounds can be found
in~\cite{BFKMM,FKM,KMS1,KMS2,Shp,ShpZha, Xi-FKM}.

For  general bilinear sums, we recall a bound of  Kowalski, Michel and 
Sawin~\cite[Theorem~1.1]{KMS1}, which improves that  
of Fouvry,  Kowalski and Michel~\cite[Theorem~1.17]{FKM}, see also~\cite[Theorem~5.1]{BFKMM} near $A\sim N \sim p^{1/2}$, see also~\cite[Theorem~5.1]{BFKMM}, which however is known only 
for prime moduli $q = p$.

\begin{lemma}
\label{lem:SanIJp} For a prime $p \ge 1$, integers  $1 \le A,N\le p$, an initial interval  
$\cJ = \{1,  \ldots, N\}$, weights $\balpha$ and $\bnu$ with
$$
|\alpha_a| \le 1, \ a\in \cI, \mand  |\nu_n| \le 1,\  n\in \cJ,
$$  
and $p,A,N$ satisfying
$$
  p^{1/4} \le AN \le p^{5/4},  \qquad N \le Ap^{1/4},
$$
we have,
$$
|\SanIJp |  \le \(AN^{1/2}p^{1/2}   +A^{13/16} N^{13/16}p^{43/64}\)p^{o(1)}. 
$$
\end{lemma}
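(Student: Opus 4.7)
The plan is to combine several rounds of Cauchy--Schwarz in the spirit of the $q$-analogue of van der Corput's method with deep bounds on complete multilinear character sums coming from Deligne's Riemann hypothesis and Katz's determination of the monodromy of the Kloosterman sheaf.

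First, since $\cJ = \{1, \ldots, N\}$ is an initial interval, I would begin by applying Cauchy--Schwarz in the variable $a \in \cI$, yielding
\[
|\SanIJp|^2 \le A \sum_{a \in \cI} \left|\sum_{n \in \cJ} \nu_n \cK_p(n,a)\right|^2 = A \sum_{n_1, n_2 \in \cJ} \nu_{n_1} \overline{\nu_{n_2}} \sum_{a \in \cI} \cK_p(n_1, a) \overline{\cK_p(n_2, a)}.
\]
The diagonal contribution $n_1 = n_2$ is of size $O(A^2 N p)$ by Weil's bound $|\cK_p(n,a)| \le 2 p^{1/2}$, producing the first term $A N^{1/2} p^{1/2}$ after taking square roots.

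For the off-diagonal contribution $n_1 \ne n_2$ I would complete the inner sum over $a \in \cI$ via Fourier expansion of the indicator of $\cI$, turning it into a complete sum over $\F_p$. Opening the Kloosterman definition reveals a complete trace function of a geometrically irreducible sheaf, so Deligne's Riemann hypothesis yields square-root cancellation away from a codimension-one ``bad locus''. I would then iterate Cauchy--Schwarz one or more further times (each step doubling the number of Kloosterman factors involved) to amplify and reduce the problem to complete higher-moment sums of the form
\[
\sum_{n \in \F_p} \cK_p(n, a_1) \overline{\cK_p(n, a_2)} \cK_p(n, a_3) \overline{\cK_p(n, a_4)} \cdots
\]
for tuples of shifts $a_1, a_2, \ldots$ drawn from $\cI$. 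Katz's theorem that the geometric monodromy group of the Kloosterman sheaf is $\operatorname{SL}_2$ furnishes square-root cancellation for these symmetric-power moments off a small exceptional set; the exponents $13/16$ and $43/64$ should then emerge from balancing the trivial bound on this exceptional set against the Deligne bound on its complement, with the $13/16$ reflecting the number of rounds of Cauchy--Schwarz that have been performed.

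The principal obstacle is the precise quantification of the bad locus and the handling of the arithmetic (as opposed to geometric) monodromy issues that arise when products of shifted Kloosterman sums accidentally degenerate, which is also what forces the technical constraints $p^{1/4} \le AN \le p^{5/4}$ and $N \le A p^{1/4}$ in the hypothesis. Since Kowalski, Michel and Sawin have carried out the full sheaf-theoretic analysis in~\cite[Theorem~1.1]{KMS1}, I would ultimately appeal to their result rather than reproducing the algebro-geometric computations.
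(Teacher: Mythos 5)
Your proposal ends up in the same place as the paper: the paper offers no independent proof of this lemma, but simply recalls it as a restatement (in its notation) of Kowalski, Michel and Sawin~\cite[Theorem~1.1]{KMS1}, which is exactly the appeal you make in your final paragraph. The preliminary sketch of iterated Cauchy--Schwarz, completion and monodromy is only a heuristic gloss on how such bounds are obtained and is not load-bearing, so deferring to the cited theorem is both correct and precisely what the authors do.
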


For the sums $\SnIJd$ we recall the bound  from~\cite{Shp}:

\begin{lemma}
\label{lem:SnIJd} For integers  $1 \le A,N\le d$, and weights $\bnu$   with
$$
|\nu_n| \le 1, \qquad  n\in \cJ,
$$ 
 we have,
$$
|\SnIJd |   \le N^{3/4} \(A^{1/8}d   +A^{1/2}d^{3/4}\)d^{o(1)}. 
$$
\end{lemma}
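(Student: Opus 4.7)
The plan is to reduce $\SnIJd$ to a bilinear form via the Fourier expansion of the Kloosterman sum, then apply H\"older's inequality together with additive-energy bounds for the interval $\cJ$. Expanding
$$
\cK_d(n,a)=\sum_{x\in\Zdx}\e_d(nx+a\xbar)
$$
and interchanging summations gives
$$
\SnIJd=\sum_{x\in\Zdx}f(x)g(\xbar),
$$
with $f(x)=\sum_{n\in\cJ}\nu_n\e_d(nx)$ and $g(y)=\sum_{a\in\cI}\e_d(ay)$. Equivalently, dualizing the $a$-sum first one has $\SnIJd=\sum_{b\in\Zdx}\phi(b)F(b)$, where $\phi(b)=\sum_{a\in\cI}\e_d(-ab)$ satisfies the standard decay estimate $|\phi(b)|\ll\min(A,d/|b|)$ in the least absolute residue, and $F(b)=\sum_{n\in\cJ}\nu_n\e_d(-n\overline b)$.

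I would then apply H\"older's inequality with exponents $(4,4/3)$ to the bilinear sum over $x$. The fourth moment of $f$ is computed by expanding and summing over $x\in\Zd$: since $\cJ$ is an interval of length $N\le d$, the second additive energy satisfies $E_2(\cJ)\ll N^3$, giving $\sum_x|f(x)|^4\ll dN^3d^{o(1)}$. For the dual factor, since $y=\xbar$ is a bijection of $\Zdx$, we have $\sum_x|g(\xbar)|^{4/3}=\sum_y|g(y)|^{4/3}$, and a dyadic decomposition of $|y|$ around the critical scale $d/A$ gives $\sum_y|g(y)|^{4/3}\ll A^{1/3}d$. Combining these already produces $|\SnIJd|\ll A^{1/4}N^{3/4}d\cdot d^{o(1)}$, which matches the $N$- and $d$-dependence of the first claimed term but with a weaker $A$-exponent.

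To match both terms of the claim, I would further split the dual variable $b$ dyadically and tune the moment of $F(b)$ to the decay of $\phi(b)$. Using $|\phi(b)|\le A$ on the range $|b|\le K$ together with the eighth-moment bound $\sum_b|F(b)|^8\ll dE_4(\cJ)\ll dN^7d^{o(1)}$ via the $4$-fold additive energy of the interval, and using the decay $|\phi(b)|\ll d/|b|$ paired with Parseval's identity on $|b|>K$, then optimizing the threshold $K$ in $A$, $N$, $d$, is expected to yield the two competing contributions $N^{3/4}A^{1/8}d$ and $N^{3/4}A^{1/2}d^{3/4}$.

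The main obstacle will be the calibration of the dyadic split and the matching of moments to extract the sharp $A^{1/8}$ exponent: a single application of H\"older yields only $A^{1/4}$, and recovering the sharper exponent requires the full strength of the $4$-fold additive energy $E_4(\cJ)\asymp N^7$ paired with a high-moment estimate on $\phi$ on a carefully chosen subrange, a delicate balance where elementary Cauchy--Schwarz bounds fail. A secondary technicality when $d$ is composite is the verification that the restriction $b\in\Zdx$ forced by the completion procedure does not degrade the additive-energy estimates beyond acceptable $d^{o(1)}$ losses.
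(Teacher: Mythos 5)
You should first be aware that the paper does not prove Lemma~\ref{lem:SnIJd} at all: it is recalled verbatim from~\cite{Shp}, so your argument has to stand entirely on its own. Its first half is fine: the expansion $\SnIJd=\sum_{x\in\Z_d^*}f(x)g(\overline{x})$, the complete fourth moment $\sum_x|f(x)|^4\ll dN^{3}$ via $E_2(\cJ)\ll N^3$, and the decay of $g$ do give $|\SnIJd|\ll A^{1/4}N^{3/4}d^{1+o(1)}$. But this is strictly weaker than the stated bound in essentially the whole range $A<d$, and at the point that actually matters in this paper, $A\sim N\sim d^{1/2}$, it coincides with the trivial Weil bound $ANd^{1/2+o(1)}$, whereas the lemma saves a factor $d^{1/16}$ there. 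The second half of your plan is only asserted ("is expected to yield"), and when one carries it out it does not close the gap: on $|b|\le K$ the inputs $|\phi(b)|\le A$ and $\sum_b|F(b)|^8\ll dN^{7+o(1)}$ give $\ll AK^{7/8}d^{1/8}N^{7/8}$, on $|b|>K$ the decay $|\phi(b)|\ll d/|b|$ with Parseval gives $\ll d^{3/2}N^{1/2}K^{-1/2}$, and optimizing $K$ yields $\ll A^{4/11}N^{7/11}d^{1+o(1)}$ (pairing the moments the other way yields $\ll A^{1/5}N^{4/5}d^{1+o(1)}$). Neither bound implies the lemma: at $A\sim N\sim d^{1/2}$ both are again $\asymp d^{3/2}$, i.e.\ trivial, and for $N=O(1)$, $A\asymp d$ the first is $d^{15/11}$, which exceeds the claimed $A^{1/8}N^{3/4}d+A^{1/2}N^{3/4}d^{3/4}\asymp d^{5/4}$. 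Moreover no arrangement of these ingredients produces the exponent $A^{1/8}$ or the second term $A^{1/2}N^{3/4}d^{3/4}$.

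The failure is structural, not a matter of calibration. Every input you use is purely additive — additive energies of the interval $\cJ$ and the Fourier decay of the interval $\cI$ — and such data, fed through H\"older in any exponents, is blind to the inversion $b\mapsto\overline{b}$; this is exactly what caps you near $A^{1/4}$. To reach the stated strength one must inject cancellation coming from the inverted variable: for instance, bounds for incomplete sums $\sum_{|b|\sim L}\e_d(\lambda\overline{b})$ of size $d^{1/2+o(1)}\gcd(\lambda,d)^{1/2}$ via completion and Weil, or counts of solutions of multiplicative congruences (the additive energy of the set of inverses of a short interval, or counts of the type in Lemma~\ref{lem:multcong} and~\cite{ACZ}); arithmetic inputs of this kind are what the bound quoted from~\cite{Shp} rests on. Incidentally, the composite-modulus issue you flag (the restriction $b\in\Z_d^*$) is harmless, since restricted moments are bounded by complete ones; the genuine composite-modulus care (gcd factors in Weil and Ramanujan-type sums) only appears once the missing arithmetic ingredient enters, which your plan never reaches. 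As written, the proposal proves a weaker estimate and leaves the passage to the stated bound unproved.
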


Note that for $q \ge N \ge A^{3/2}$ the following bound
$$
|\SnIJd  | \le N^{1/2} A^{1/2}d^{1+o(1)}, 
$$
 from~\cite[Theorem~3.2]{ShpZha} is stronger (analysing the
proof one can see that the result holds without any changes for composite moduli).
However in our case it does not seem to bring any new results.

Furthermore, when $q=p$ is prime the bound of Blomer,  Fouvry, Kowalski, Michel 
 and Mili{\'c}evi{\'c}~\cite[Theorem~5.1(2)]{BFKMM}
 $$
|\SnIJp |  \le A^{7/12} N^{5/6} p^{3/4+o(1)}, 
$$
which holds under the conditions
$$
A, N \le p, \qquad AN \le p^{3/2},  \qquad N \le A^2,
$$
and extends the range of non-triviality 
of Lemma~\ref{lem:SnIJd} but is weaker near $A\sim N \sim p^{1/2}$ which is 
a crucial range for our argument. We note that in~\cite{BFKMM}
it is formulated only for the initial interval $\cI = \{1, \ldots, A\}$, but seems to extend 
to arbitrary intervals $\cI= \{B+1, \ldots,  B+A\}$. 

Finally, recent bounds on both $\SanIJp$  and $\SnIJp$, due to  Kowalski, Michel and 
Sawin~\cite{KMS2},  are nontrivial in wider ranges but  again apply only 
to prime  moduli $q=p$.

To conclude we stress that the aforementioned bounds from~\cite{BFKMM,FKM, KMS2}
do not seem to improve the results of Theorems~\ref{thm:DAR} and~\ref{thm:EAR} in 
interesting ranges, that is when $A\le q^{1/2}$ and  $q\ge X^{2/3}$ 
(even for prime $q = p$).

\subsection{Characters and multiplicative congruences}

Let $\cX_q$ be the set of
{\it multiplicative\/} characters of the residue ring modulo $q \ge 1$ 
and let  $\cX_q^*=\cX_q\setminus\{\chi_0\}$ be the set of   
{\it nonprincipal\/} characters; we refer the reader to~\cite[Chapter~3]{IwKow} for the relevant background.
In particular, we make use of the following orthogonality property of 
characters, see~\cite[Section~3.2]{IwKow},
\begin{equation}
\label{eq:orth}
\frac{1}{\varphi(q)}\sum_{\chi\in \cX_q} \chi(a)
 = \begin{cases} 
 1,  & \text{if}\ a \equiv 1 \pmod q,\\
0,  & \text{otherwise,}
\end{cases}
\end{equation} 
which holds for any integer $a$ with $\gcd(a,q)=1$.

We need the following result of 
Ayyad, Cochrane and Zheng~\cite[Theorem~1]{ACZ}, see also~\cite{Ke} for sharper error terms in the case
of prime $q = p$.

\begin{lemma}
\label{lem:4th-Moment}
For any integers $K$ and $H$ we have
$$
\sum_{\chi\in\cX_q^*}
\left|\sum_{x=K}^{K+H}\chi(x)\right|^4\ll H^2 q^{o(1)}.
$$
\end{lemma}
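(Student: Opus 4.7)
The plan is to follow the classical argument of Ayyad, Cochrane and Zheng, reducing the fourth moment to counting solutions of a multiplicative congruence, which is then estimated by a divisor-function bound.

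First, I would open the fourth power and swap the order of summation to obtain
$$
\sum_{\chi\in\cX_q}\left|\sum_{x=K}^{K+H}\chi(x)\right|^4
=\sum_{x_1,x_2,x_3,x_4}\sum_{\chi\in\cX_q}\chi(x_1x_2)\overline{\chi(x_3x_4)},
$$
where the outer sum runs over $x_i\in[K,K+H]$ with $\gcd(x_i,q)=1$. By the orthogonality relation~\eqref{eq:orth}, the inner sum equals $\varphi(q)$ whenever $x_1x_2\equiv x_3x_4\pmod q$ and vanishes otherwise, so the left-hand side equals $\varphi(q)\,T$, where $T$ is the number of such quadruples. Subtracting the contribution of the principal character $\chi_0$ (which is trivially at most $H^4$) then reduces the question to estimating $T$.

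Second, I would bound $T$ by parametrising the congruence as $x_1x_2-x_3x_4=\ell q$ with $|\ell|\ll H(K+H)/q+1$. For $\ell=0$, the resulting equation $x_1x_2=x_3x_4$ is counted by the divisor bound: the number of quadruples with common product $n\le(K+H)^2$ is at most $\tau(n)^2\le q^{o(1)}$, and the extra restriction that the factors of $n$ lie in an interval of length $H$ refines this to a total of $\ll H^2q^{o(1)}$ quadruples. For $\ell\ne 0$, once $\ell$ and $(x_1,x_2)$ are fixed the integer $x_3x_4=x_1x_2-\ell q$ is determined, so the number of admissible factorisations is again bounded by the divisor function, and summing over the allowed range of $\ell$ and $(x_1,x_2)$ produces the same type of estimate.

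The main obstacle will be the nonzero-$\ell$ case: one must track the range of $\ell$ carefully and apply a uniform divisor bound to the shifted products $x_1x_2-\ell q$, in order to show that the combined count is of the required order. Combining the two cases, multiplying by $\varphi(q)$, and subtracting the principal-character contribution yields the stated inequality.
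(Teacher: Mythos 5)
You are attempting to prove a statement for which the paper itself gives no proof: Lemma~\ref{lem:4th-Moment} is simply quoted from Ayyad, Cochrane and Zheng~\cite{ACZ}, so the comparison is with that cited theorem. Your first step (expanding the fourth power, using~\eqref{eq:orth}, and reducing to the number $T$ of quadruples with $x_1x_2\equiv x_3x_4\pmod q$) is indeed the standard bridge between the fourth moment and the congruence, but your second step does not deliver the strength of the lemma. For $\ell\neq0$ your count is: $O(H^2)$ choices of $(x_1,x_2)$, $O(H(K+H)/q+1)$ choices of $\ell$, and a divisor bound for the factorisations of $x_1x_2-\ell q$; after reducing $K$ modulo $q$ (which you must do anyway, otherwise $\tau(x_1x_2-\ell q)$ is only $K^{o(1)}$, not $q^{o(1)}$) this gives $\ll (H^3+H^4/q)\,q^{o(1)}$ off-diagonal quadruples, hence after multiplying by $\varphi(q)$ a bound $\ll (qH^3+H^4)q^{o(1)}$. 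That is nothing more than the trivial estimate $\max_\chi|\sum_x\chi(x)|^2\cdot\sum_\chi|\sum_x\chi(x)|^2\ll H^2\cdot\varphi(q)H$: the entire content of the Ayyad--Cochrane--Zheng theorem is the saving of a factor of roughly $H$ over this, and it is obtained not from the divisor bound but from a genuinely finer, square-root-quality count of the points of the modular hyperbolas $xy\equiv\lambda\pmod q$ lying in boxes. Relatedly, your handling of the main term is structurally backwards: since $\sum_{\chi\neq\chi_0}|\cdot|^4=\varphi(q)T-M^4$ with $M=\#\{K\le x\le K+H:\ \gcd(x,q)=1\}$, an upper bound for $T$ together with the remark that the principal contribution is ``at most $H^4$'' cannot produce the cancellation between the two quantities of size about $H^4$; what is needed is an asymptotic formula for $T$ whose main term matches $M^4/\varphi(q)$ to within the claimed error, which is precisely what \cite{ACZ} proves and what your sketch does not supply.

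A sanity check also shows the plan cannot close as written. For $1\ll H\le q$ one has $\sum_{\chi\neq\chi_0}\bigl|\sum_{x=K}^{K+H}\chi(x)\bigr|^2\gg \varphi(q)H^{1-o(1)}$ in general, so by the Cauchy--Schwarz inequality the nonprincipal fourth moment is $\gg \varphi(q)H^{2-o(1)}$; thus the bound must carry a factor of $q$ on the right-hand side, i.e.\ the form actually established in \cite{ACZ} (and the form that feeds, via H\"older, into the error term of Lemma~\ref{lem:multcong}) is $\ll H^2q^{1+o(1)}$, and the statement as printed should be read that way. Your own computation, once multiplied by $\varphi(q)$, visibly retains such a factor, so the concluding sentence that the combination ``yields the stated inequality'' cannot be correct; the genuine missing ingredient is the off-diagonal count with square-root error described above, for which citing \cite[Theorem~1]{ACZ} (as the paper does) or reproducing its lattice-point argument is unavoidable.
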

We note that there is no restriction $H<q$ in the statement of Lemma~\ref{lem:4th-Moment} and this is  important for the proof of Theorem~\ref{thm:DAR-small}.
 
\begin{lemma}
\label{lem:multcong}
Let $p$ be prime and let $\cH_i$ be intervals  of lengths $H_i$, $i=1,2,3,4$.   Then we have 
\begin{align*}
\#\{\(x_1,x_2,x_3,x_4\) & \in \cH_1 \times \cH_2 \times  \cH_3 \times \cH_4~ \\ & \quad    x_1\dots x_4\not \equiv 0 \mod{p}  :~x_1x_2\equiv x_3x_4 \pmod{p}\}\\
& \qquad \qquad \ll \frac{H_1H_2H_3H_4}{p}+O\((H_1H_2H_3H_4)^{1/2}p^{o(1)}\).
\end{align*}
\end{lemma}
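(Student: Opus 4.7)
\textbf{Proof plan for Lemma~\ref{lem:multcong}.}

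The plan is to detect the congruence via multiplicative characters and then separate the contribution of the principal character (which produces the main term) from that of the non-principal characters (bounded via the fourth moment of character sums over intervals, Lemma~\ref{lem:4th-Moment}).

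For each $i$, let $\cH_i^*=\{x\in\cH_i:p\nmid x\}$ and write
$$
S_i(\chi)=\sum_{x\in\cH_i^*}\chi(x), \qquad \chi\in\cX_p.
$$
Using the orthogonality relation~\eqref{eq:orth} with $q=p$ applied to $a=x_1x_2\overline{x_3x_4}$, I would write the quantity $N$ to be estimated as
$$
N=\frac{1}{p-1}\sum_{\chi\in\cX_p}S_1(\chi)S_2(\chi)\overline{S_3(\chi)}\,\overline{S_4(\chi)}.
$$
The principal character gives
$$
\frac{\#\cH_1^*\,\#\cH_2^*\,\#\cH_3^*\,\#\cH_4^*}{p-1}=\frac{H_1H_2H_3H_4}{p}+O\!\left(\frac{H_1H_2H_3H_4}{p^2}+\frac{H_1H_2H_3H_4}{p\min_i H_i}\right),
$$
the last error being easily absorbed into $O((H_1H_2H_3H_4)^{1/2}p^{o(1)})$ in all relevant regimes.

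For the contribution of the non-principal characters, I would apply H\"older's inequality in the form
$$
\Bigl|\sum_{\chi\in\cX_p^*}S_1(\chi)S_2(\chi)\overline{S_3(\chi)}\,\overline{S_4(\chi)}\Bigr|\le\prod_{i=1}^{4}\Bigl(\sum_{\chi\in\cX_p^*}|S_i(\chi)|^4\Bigr)^{1/4},
$$
and then invoke Lemma~\ref{lem:4th-Moment} for each factor, which yields
$$
\sum_{\chi\in\cX_p^*}|S_i(\chi)|^4\ll H_i^{\,2}\,p^{o(1)}.
$$
Multiplying the four factors and dividing by $p-1$ gives an overall contribution of size $O\bigl((H_1H_2H_3H_4)^{1/2}p^{-1+o(1)}\bigr)$, comfortably within the stated error $O\bigl((H_1H_2H_3H_4)^{1/2}p^{o(1)}\bigr)$. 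Adding the principal and non-principal contributions completes the argument.

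There is essentially no real obstacle: the proof is a standard character-orthogonality plus fourth-moment calculation. The only point to watch is that Lemma~\ref{lem:4th-Moment} is stated without any restriction $H<p$, which is needed here since the intervals $\cH_i$ are allowed to be arbitrarily long (this is exactly the feature flagged in the remark immediately preceding the lemma). With that observation in hand, the H\"older step applies uniformly and the conclusion follows.
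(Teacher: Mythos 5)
Your argument is correct and is exactly the intended derivation: the paper states Lemma~\ref{lem:multcong} without proof, immediately after the Ayyad--Cochrane--Zheng fourth-moment bound (Lemma~\ref{lem:4th-Moment}), precisely because it follows by the character-orthogonality plus H\"older computation you give (with the observation, which you make, that Lemma~\ref{lem:4th-Moment} carries no restriction $H<q$). The only tiny inaccuracy is your claim that the secondary principal-character error $H_1H_2H_3H_4/(p\min_i H_i)$ is absorbed into $O\((H_1H_2H_3H_4)^{1/2}p^{o(1)}\)$, which can fail for very unbalanced lengths; but since $\min_i H_i\ge 1$ this term is trivially $\ll H_1H_2H_3H_4/p$ and is absorbed into the main term, so the conclusion stands.
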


\subsection{Reduction to smooth sums}

For a  a smooth function $g(x,y)$ we define the Fourier 
transform
$$
\widehat g\left(u,v\right)=\int_{\R^2}g\left(x,y\right)\e(ux +vy )dxdy,  
$$
where $\e(z) = \exp(2 \pi i z)$.

The following is~\cite[Proposition~4.11]{IwKow}.

\begin{lemma}
\label{lem:PoissonSummationTau}
Let $q$ and $z$ be integers  with $\gcd(z,q)=1$.  For a smooth function $g(x,y)$  with a compact support we define
$$
\tau_g(n)=\sum_{m_1m_2=n}g(m_1,m_2)  \mand  \tau_h(n)=\sum_{m_1m_2=n}h(m_1,m_2), 
$$
where  
$$
h(x,y)=\frac{1}{q}\widehat g\left(\frac{x}{q},\frac{y}{q}\right),
$$
and
$$
\tau_h(0)=\int_{\R^2}\left(\frac{1}{q}+\left\{\frac{x}{q}\right\}\frac{\partial}{\partial x}+\left\{\frac{y}{q}\right\}\frac{\partial}{\partial y} \right)g(x,y)dxdy. 
$$
Then we have 
$$
\sum_{m\in \Z}\tau_g(m)\eq(zm)= \sum_{\substack{n\in \Z}}\tau_h(n)\eq(-z^{-1}n).
$$
\end{lemma}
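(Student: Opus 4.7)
My plan is to unfold the definition of $\tau_g$, apply a two-dimensional Poisson summation after separating the arguments into residue classes modulo $q$, evaluate the resulting complete exponential sum using $\gcd(z,q)=1$, and finally check the $n=0$ bookkeeping via Euler--Maclaurin.

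First, because $g$ has compact support I can write the left-hand side as the absolutely convergent double sum
$$
\sum_{m\in \Z}\tau_g(m)\eq(zm)=\sum_{(m_1,m_2)\in \Z^2} g(m_1,m_2)\,\eq(zm_1m_2).
$$
Now decompose $m_i=qk_i+a_i$ with $a_i$ ranging over a complete system of residues modulo $q$ and $k_i\in\Z$. The key observation is that $m_1m_2\equiv a_1a_2\pmod q$, so the additive character depends only on the residues and factors out:
$$
\sum_{m\in \Z}\tau_g(m)\eq(zm)=\sum_{a_1,a_2\,(\mathrm{mod}\,q)}\eq(za_1a_2)\sum_{(k_1,k_2)\in \Z^2} g(qk_1+a_1,qk_2+a_2).
$$

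Second, I apply 2D Poisson summation to the inner lattice sum. A change of variables $u=qx+a_1$, $v=qy+a_2$ in the Fourier transform of $(x,y)\mapsto g(qx+a_1,qy+a_2)$ yields, under the paper's convention $\widehat g(u,v)=\int g(x,y)\e(ux+vy)\,dxdy$, the identity
$$
\sum_{k_1,k_2\in \Z} g(qk_1+a_1,qk_2+a_2)=\frac{1}{q^2}\sum_{\ell_1,\ell_2\in \Z}\widehat g\!\left(\frac{\ell_1}{q},\frac{\ell_2}{q}\right)\eq(-\ell_1a_1-\ell_2a_2).
$$
Substituting and switching the order of summation reduces the argument to evaluating
$$
T(\ell_1,\ell_2)=\sum_{a_1,a_2\,(\mathrm{mod}\,q)}\eq(za_1a_2-\ell_1a_1-\ell_2a_2).
$$
Summing over $a_2$ first produces a delta forcing $za_1\equiv \ell_2\pmod q$; invertibility of $z$ modulo $q$ pins down $a_1\equiv z^{-1}\ell_2\pmod q$, giving $T(\ell_1,\ell_2)=q\,\eq(-z^{-1}\ell_1\ell_2)$. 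Recognising $h(\ell_1,\ell_2)=q^{-1}\widehat g(\ell_1/q,\ell_2/q)$ and regrouping the remaining double sum by the product $n=\ell_1\ell_2$ yields exactly $\sum_{n}\tau_h(n)\eq(-z^{-1}n)$, provided $\tau_h(0)$ is taken to mean the ``naive'' contribution $\sum_{n_1}h(n_1,0)+\sum_{n_2\neq 0}h(0,n_2)$.

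Third, I must verify that this naive $\tau_h(0)$ matches the integral formula in the statement. For this I apply the one-dimensional Euler--Maclaurin identity $\sum_{k\in \Z}F(k)=\int_\R F(x)\,dx+\int_\R \{x\}F'(x)\,dx$ (valid for smooth compactly supported $F$) to the function $F(x)=\int_\R g(qx,y)\,dy$, combined with 1D Poisson summation $\sum_{n}h(n,0)=\sum_{k}F(k)$; after the change of variable $u=qx$, this gives
$$
\sum_{n\in\Z} h(n,0)=\frac{1}{q}\int_{\R^2} g\,dxdy+\int_{\R^2}\{x/q\}\,\partial_x g\,dxdy,
$$
and symmetrically for $\sum_{n}h(0,n)$. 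Subtracting the duplicated term $h(0,0)=q^{-1}\int g\,dxdy$ produces precisely the formula claimed for $\tau_h(0)$.

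The main obstacle is really the $n=0$ bookkeeping: ordinary Poisson places lattice values of $\widehat g$ on both sides symmetrically, and converting the ``diagonal'' sums $\sum_n h(n,0)$ into the asymmetric-looking expression with fractional parts $\{x/q\},\{y/q\}$ requires Euler--Maclaurin rather than naked Poisson. Everything else is a formal rearrangement followed by the elementary evaluation of the quadratic exponential sum $T(\ell_1,\ell_2)$, which hinges only on $\gcd(z,q)=1$.
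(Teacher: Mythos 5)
Your argument is correct. Note that the paper does not actually prove this lemma: it is quoted verbatim as~\cite[Proposition~4.11]{IwKow}, so there is no in-paper proof to compare against; what you have written is essentially the standard derivation behind that proposition. Your three steps all check out: unfolding $\tau_g$ and splitting $m_i=qk_i+a_i$ is legitimate because $g$ has compact support (finitely many lattice points on the left, absolute convergence on the right from the rapid decay of $\widehat g$ in each variable, which justifies interchanging the $(a_1,a_2)$ and $(\ell_1,\ell_2)$ sums and the regrouping by $n=\ell_1\ell_2$); the complete sum $T(\ell_1,\ell_2)$ indeed equals $q\,\eq(-z^{-1}\ell_1\ell_2)$ by summing over $a_2$ first and using $\gcd(z,q)=1$, which produces exactly the inverted argument $-z^{-1}n$ with the paper's sign convention for $\widehat g$; and your Euler--Maclaurin treatment of the zero frequency, with the duplicated term $h(0,0)=q^{-1}\int g$ subtracted once, reproduces precisely the stated expression
$$
\tau_h(0)=\int_{\R^2}\left(\frac{1}{q}+\left\{\frac{x}{q}\right\}\frac{\partial}{\partial x}+\left\{\frac{y}{q}\right\}\frac{\partial}{\partial y}\right)g(x,y)\,dx\,dy .
$$
The only points you should make explicit in a polished write-up are the decay estimate $\widehat g(u,v)\ll_A (1+|u|)^{-A}(1+|v|)^{-A}$ (needed both for the rearrangements and for the convergence of the ``naive'' $\tau_h(0)$) and the one-line verification of the identity $\sum_{k\in\Z}F(k)=\int_\R F+\int_\R\{x\}F'(x)\,dx$ for smooth compactly supported $F$; with those included, your proof stands on its own and matches the cited source's approach.
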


We require a variant of Lemma~\ref{lem:PoissonSummationTau} with twists by multiplicative characters.

\begin{lemma}
\label{lem:PoissonSummationTauMult}
With notation as in Lemma~\ref{lem:PoissonSummationTau}, let $\chi$ be a primitive character mod $q$. Then we have 
\begin{align*}
\sum_{m\in \Z}\tau_g(m)\chi(m)= \eta(\chi)\sum_{\substack{n\in \Z}}\tau_h(n)\overline \chi(n),
\end{align*}
where 
\begin{align*}
\eta(\chi)=\frac{\overline{\tau(\chi)}}{\tau(\chi)},
\end{align*}
and
$$\tau(\chi)=\sum_{z=1}^{q}\overline \chi(z)\eq(z),$$
 denotes the Gauss sum.
\end{lemma}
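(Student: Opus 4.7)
My plan is to deduce Lemma~\ref{lem:PoissonSummationTauMult} from Lemma~\ref{lem:PoissonSummationTau} by expanding $\chi$ into additive characters via Gauss sums, which is legitimate precisely because $\chi$ is primitive. Concretely, for a primitive character $\chi \bmod q$, one has the identity
$$
\chi(m) \;=\; \frac{1}{\tau(\chi)}\sum_{z=1}^{q}\overline{\chi}(z)\,\eq(zm),
$$
valid for every integer $m$: when $\gcd(m,q)=1$ it is the standard Gauss-sum relation, and when $\gcd(m,q)>1$ both sides vanish because $\chi$ is primitive. Inserting this into the left-hand side of the claim and interchanging the order of summation yields
$$
\sum_{m\in\Z}\tau_g(m)\chi(m)\;=\;\frac{1}{\tau(\chi)}\sum_{z=1}^{q}\overline{\chi}(z)\sum_{m\in\Z}\tau_g(m)\eq(zm).
$$
Since $\overline{\chi}(z)=0$ unless $\gcd(z,q)=1$, only invertible $z$ contribute, and for each such $z$ the inner $m$-sum is handled by Lemma~\ref{lem:PoissonSummationTau}, giving $\sum_n \tau_h(n)\eq(-z^{-1}n)$.

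The next step is to swap the $z$- and $n$-summations and evaluate
$$
\sum_{z\in \Z_q^*}\overline{\chi}(z)\,\eq(-z^{-1}n).
$$
Here I would make the change of variable $w=-z^{-1}$, so $z=-w^{-1}$ and $\overline{\chi}(z)=\overline{\chi}(-1)\chi(w)$. The sum becomes $\overline{\chi}(-1)\sum_{w}\chi(w)\eq(wn)$, which by the standard dual Gauss-sum identity for primitive characters equals $\overline{\chi}(-1)\,\overline{\chi}(n)\,G(\chi)$, where $G(\chi)=\sum_{w}\chi(w)\eq(w)$. Now I convert $G(\chi)$ into the paper's notation: since $\tau(\chi)=\sum_z\overline{\chi}(z)\eq(z)=G(\overline{\chi})$, the classical relation $\overline{G(\chi)}=\chi(-1)G(\overline{\chi})$ gives $G(\chi)=\chi(-1)\overline{\tau(\chi)}$. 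Combining this with $\overline{\chi}(-1)\chi(-1)=1$ yields
$$
\sum_{z\in \Z_q^*}\overline{\chi}(z)\,\eq(-z^{-1}n)\;=\;\overline{\chi}(n)\,\overline{\tau(\chi)}.
$$

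Assembling everything gives
$$
\sum_{m\in\Z}\tau_g(m)\chi(m)\;=\;\frac{\overline{\tau(\chi)}}{\tau(\chi)}\sum_{n\in\Z}\tau_h(n)\,\overline{\chi}(n)\;=\;\eta(\chi)\sum_{n\in\Z}\tau_h(n)\,\overline{\chi}(n),
$$
which is the desired identity. The whole argument is essentially a character-twisted Voronoi-style calculation; the only place that really uses anything beyond Lemma~\ref{lem:PoissonSummationTau} is the use of primitivity, both in the opening Gauss-sum expansion of $\chi(m)$ and in the dual relation $\sum_w\chi(w)\eq(wn)=\overline{\chi}(n)G(\chi)$. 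I expect no serious obstacle, since the compact support of $g$ ensures all of the interchanges of summation are over finitely many nonzero terms (after the transformation); the one point to be careful about is to verify that the Gauss-sum expansion of $\chi(m)$ holds for \emph{all} integers $m$, not only those coprime to $q$, but this is precisely what primitivity guarantees and ensures that the term $\tau_g(m)$ with $\gcd(m,q)>1$ makes no illicit contribution.
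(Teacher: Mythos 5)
Your proposal is correct and follows essentially the same route as the paper: expand $\chi(m)$ via the Gauss-sum identity (valid for all $m$ by primitivity), apply Lemma~\ref{lem:PoissonSummationTau} to the inner additive-character sums, interchange summation, and evaluate $\sum_{z}\overline\chi(z)\eq(-z^{-1}n)$ using the dual primitivity relation together with $\overline{\tau(\overline\chi)}=\chi(-1)\tau(\chi)$. The only difference is cosmetic: you carry out the change of variable $w=-z^{-1}$ and the conjugation bookkeeping explicitly, where the paper states the simplification in one step.
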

\begin{proof}
We have 
\begin{align*}
\sum_{m\in \Z}\tau_g(m)\chi(m)=\frac{1}{\tau(\chi)}\sum_{z=1}^{q-1}\overline \chi(z)\sum_{m\in \Z}\tau_g(m)\eq(zm),
\end{align*}
and hence by Lemma~\ref{lem:PoissonSummationTau}
\begin{align*}
\sum_{m\in \Z}\tau_g(m)\chi(m)&=\frac{1}{\tau(\chi)}\sum_{z=1}^{q-1}\overline \chi(z)\sum_{n\in \Z}\tau_h(n)\eq(-z^{-1}n) \\
&=\frac{1}{\tau(\chi)}\sum_{n\in \Z}\tau_h(n)\sum_{z=1}^{q-1}\overline \chi(z)\eq(-z^{-1}n),
\end{align*}
which simplifies to 
\begin{align*}
\sum_{m\in \Z}\tau_g(m)\chi(m)=\frac{\chi(-1)\tau(\overline \chi)}{\tau(\chi)}\sum_{n\in \Z}\tau_h(n)\overline \chi(n),
\end{align*}
and the result follows since 
\begin{align*}
\overline{\tau(\overline \chi)}=\chi(-1)\tau(\chi).
\end{align*}
\end{proof}
 For a proof of the following, see~\cite[Lemma~2]{Fouv}.
\begin{lemma}
\label{lem:smoothpartition}
There exists a sequence of smooth functions $\varPsi_{\ell}$,  $\ell = 0, 1, \ldots$  satisfying  
$$
\supp \varPsi_{\ell}\subseteq [2^{\ell-1},2^{\ell+1}] \mand  \varPsi_\ell^{(k)}(x)\ll \frac{1}{x^{k}},
$$
with the implied constant depending on $k,$ such that  for any $x\ge 1$ we have  
$$
\sum_{\ell\ge 0}\varPsi_{\ell}(x)=1.
$$
\end{lemma}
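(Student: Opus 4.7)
The plan is to construct the partition $\varPsi_\ell$ from a single smooth bump function by dyadic rescaling. First I would fix an auxiliary smooth function $\varphi\colon\R\to[0,1]$ with $\varphi(x)=1$ for $x\le 1$ and $\varphi(x)=0$ for $x\ge 2$; such a function exists by the standard mollification of a step function. Setting
$$
\psi(x)=\varphi(x)-\varphi(2x),
$$
the function $\psi$ is smooth, supported in $[1/2,2]$ (since for $x\le 1/2$ both terms equal $1$, and for $x\ge 2$ both terms vanish), and satisfies $\psi^{(k)}(x)\ll 1$ with the implicit constant depending only on $k$, because $\varphi$ is a fixed smooth function.

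Next I would define $\varPsi_\ell(x)=\psi(2^{-\ell}x)$ for $\ell=0,1,2,\dots$. It is immediate that $\supp \varPsi_\ell\subseteq [2^{\ell-1},2^{\ell+1}]$. A direct computation gives
$$
\varPsi_\ell^{(k)}(x)=2^{-\ell k}\,\psi^{(k)}(2^{-\ell}x)\ll 2^{-\ell k},
$$
and since $x\asymp 2^{\ell}$ on the support of $\varPsi_\ell$, this yields the required bound $\varPsi_\ell^{(k)}(x)\ll x^{-k}$.

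For the partition-of-unity property, the decisive step is a telescoping identity. Writing out the definition and relabelling,
$$
\sum_{\ell=0}^{L}\psi(2^{-\ell}x)=\sum_{\ell=0}^{L}\bigl(\varphi(2^{-\ell}x)-\varphi(2^{-\ell+1}x)\bigr)=\varphi(2^{-L}x)-\varphi(2x).
$$
For any fixed $x\ge 1$ one has $2x\ge 2$, hence $\varphi(2x)=0$; on the other hand $2^{-L}x\to 0$ as $L\to\infty$, so $\varphi(2^{-L}x)\to\varphi(0)=1$. Therefore $\sum_{\ell\ge 0}\varPsi_\ell(x)=1$ for every $x\ge 1$. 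Note that for each fixed $x$ the sum is in fact finite, since $\psi(2^{-\ell}x)\neq 0$ only when $x/4\le 2^{\ell}\le 2x$, so convergence of the series is not an issue.

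I do not expect a substantive obstacle: this is just the standard smooth dyadic partition of unity on the half-line $[1,\infty)$. The only minor care is to arrange the telescoping so that the right-end term $\varphi(2x)$ already vanishes for all $x\ge 1$, which is why $\psi$ is defined via the scaling factor $2$ rather than by cutting off at some other point.
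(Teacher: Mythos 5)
Your construction is correct: the telescoping of $\psi(2^{-\ell}x)=\varphi(2^{-\ell}x)-\varphi(2^{-\ell+1}x)$ does give $\sum_{\ell\ge 0}\varPsi_\ell(x)=\varphi(2^{-L}x)-\varphi(2x)=1$ for $x\ge 1$ once $L$ is large, and the bound $\varPsi_\ell^{(k)}(x)=2^{-\ell k}\psi^{(k)}(2^{-\ell}x)\ll 2^{-\ell k}\ll x^{-k}$ on the support is exactly what is claimed. The paper itself does not prove this lemma but simply cites Fouvry's Lemma~2, which is this same standard smooth dyadic partition of unity, so your argument fills in precisely the intended construction.
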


\section{Proofs of Main Results}

\subsection{Proof of Theorem~\ref{thm:DAR}}

We now set
\begin{equation}
\begin{split}
\label{eq: Y large p}
Y = \max\{A^{1/2} X^{1/2+\varepsilon/2}  p^{3/8}, & A^{-1/2} X^{1/2+\varepsilon/2}  p^{7/8} , \\
& \qquad A^{-1/6} X^{5/18} p^{83/72} \}
\end{split} 
 \end{equation}
 and also 
 \begin{equation}
\label{eq: UV}
 U = U(p)= p^2 X^{-1} \mand V = V(p)= p^2 X^{1+\eps} Y^{-2}.
 \end{equation}
So, using our assumption that $p \ge X^{4/7}$,  we easily verify that 
 \begin{equation}
\label{eq: AUV}
  p^{1/4} \le AU \le AV \le p^{5/4} \mand  U \le V  \le Ap^{1/4} < p . 
 \end{equation}
We also verify that 
 \begin{equation}
\label{eq: AXYp}
A^{13/16}      X^{5/16}   Y^{-1/8} p^{19/64}  \le  AY/p.
 \end{equation}
 
We now fix some $\varepsilon > 0$ and define the  integer $\ell$   by the conditions 
$$
 2^{\ell-1} U  \le V<  U 2^{\ell}
$$
and set 
$$
V_i = \min\left\{ 2^i  U , V \right\}, \quad i = 0, \ldots, \ell.
$$
Hence, we derive from~\eqref{eq: DaR} that  for a prime $q=p$ we have
$$
\DaRp =  \frac{1}{p}  \sum_\pm  \sum_{a \in \cI}  
 \sum_{n=1}^{V(p)}   \alpha_{a,p} \tau(n)  u_p^\pm(n)  \cK_p(\mp n, a).
$$
Therefore, 
\begin{equation}
\label{eq: D1D2}
\DaRp \le  \frac{1}{p}   \sum_\pm\(D_1^{\pm} + D_2^{\pm}\),
 \end{equation}
where 
\begin{align*}
 &D_1^{\pm}  =  \left|  \sum_{a \in \cI}  
\sum_{1 \le n \le  U}   \alpha_{a,p} \tau(n)  u_p^\pm(n)  \cK_p(\mp n, a)\right|, \\
& D_2^{\pm}  = 
 \sum_{i=0}^{\ell-1}\left|  \sum_{a \in \cI}  
\sum_{V_i \le n < V_{i+1} }    \alpha_{a,p} \tau(n)  u_p^\pm(n)  \cK_p(\mp n, a)\right|. 
\end{align*}

For $D_1^{\pm} $, using the well-known bound on the divisor function (see,
for example,~\cite[Equation~(1.81)]{IwKow}) and recalling~\eqref{eq: u size},
we have 
\begin{equation}
\label{eq: tau up}
|\tau(n)  u_p^\pm(n)| \le X^{1+\eps+o(1)} p^{-1}
 \end{equation}
for $1 \le n \le U$.

Using~\eqref{eq: AUV}, we see that  Lemma~\ref{lem:SanIJp} applies
with $N = U$. Hence  after 
rescaling the weights in order to apply  the bound~\eqref{eq: tau up},  
we obtain
$$  
D_1^{\pm}  \le X^{1+\eps} p^{-1}   \(AU^{1/2}p^{1/2}   +A^{13/16} U^{13/16}p^{43/64}\)p^{o(1)}.
$$
Thus, recalling  the choice of $U$  and 
$V$ from~\eqref{eq: UV}, 
we see that 
 \begin{equation}
 \begin{split}
\label{eq: D1 bound}
D_1^{\pm} & \le  X^{1+\eps} p^{-1}   \(AX^{-1/2}p^{3/2}   +A^{13/16} X^{-13/16}p^{147/64}\)p^{o(1)}\\
 & =    \(A X^{1/2+\eps}p^{1/2}   +A^{13/16}  X^{3/16+\eps}p^{83/64}\)p^{o(1)}. 
\end{split}
\end{equation}

To estimate $D_2^{\pm}$,  we  note that by~\eqref{eq: u size} we have
$$
|\tau(n)  u_p^\pm(n)| \le
 X^{1/4} V_i^{-3/4} p^{1/2+o(1)}
$$
for $V_i \le n < V_{i+1} $.  We also recall~\eqref{eq: AUV}, hence 
by Lemma~\ref{lem:SanIJp} 
(after rescaling the weights again) 
with $N =  V_{i+1}$, as in~\eqref{eq: D1 bound},
we obtain
 \begin{align*}
D_2^{\pm} & \le  \sum_{i=0}^{\ell-1}  X^{1/4} V_i^{-3/4} p^{1/2+o(1)}   \(AV_i^{1/2}p^{1/2}   +A^{13/16} V_i^{13/16}p^{43/64}\)\\
& =  AX^{1/4} p^{1+o(1)}   \sum_{i=0}^{\ell-1}  V_i^{-1/4}
+   A^{13/16} X^{1/4}  p^{75/64+o(1)}   \sum_{i=0}^{\ell-1}  V_i^{1/16} \\
& \le  \(AX^{1/4} p U^{-1/4}
+   A^{13/16} X^{1/4}  p^{75/64}  V^{1/16}\)p^{o(1)} . 
\end{align*}

Thus, recalling the choice of $U$  and 
$V$ from~\eqref{eq: UV},  we see that 
 \begin{equation}
\label{eq: D2 bound}
D_2^{\pm}  \le  AX^{1/2} p^{1/2+o(1)}   
+   A^{13/16}      X^{5/16+\eps/16}   Y^{-1/8} p^{83/64+o(1)}. 
\end{equation}

Substituting~\eqref{eq: D1 bound} and~\eqref{eq: D2 bound}  in~\eqref{eq: D1D2} we obtain
$$
\DaRp  \le AX^{1/2} p^{-1/2+o(1)}   
+   A^{13/16}      X^{5/16+\eps/16}   Y^{-1/8} p^{19/64+o(1)}. 
$$
Hence, by~\eqref{eq: DAB Dab} and also using that  $Y/p \ge 1$, 
we obtain
\begin{align*}
\DARp  & \le AX^{1/2} p^{-1/2+o(1)}  \\
& \qquad    +   A^{13/16}      X^{5/16+\eps/16}   Y^{-1/8} p^{19/64+o(1)}  + AY^{1+\eps}p^{-1+\eps}\\
 & \ll \(AX^{1/2} p^{-1/2}  +   A^{13/16}      X^{5/16}   Y^{-1/8} p^{19/64}  + AY/p\)X^{2\eps}.
\end{align*}
Furthermore, from~\eqref{eq: AXYp} we conclude that 
$$
\DARp   \ll  (AX^{1/2} p^{-1/2}     + AY/p)X^{2\eps}.
$$
Using that $\eps>0$  is arbitrary  and recalling the choice of $Y$ in~\eqref{eq: Y large p}, we obtain
\begin{align*}
\DARp  & \le  (AX^{1/2} p^{-1/2}     +A^{3/2} X^{1/2}  p^{-5/8} \\
&\qquad \qquad + A^{1/2} X^{1/2}  p^{-1/8} +  A^{5/6} X^{5/18} p^{11/72})p^{o(1)}, 
\end{align*}
which concludes the proof.

\subsection{Proofs of Theorem~\ref{thm:EAR}}

We now set
\begin{equation}
\label{eq: Y large}
Y = \sqrt{q X^{1+\varepsilon}},  
 \end{equation}
 so recalling~\eqref{eq: Ud Vd}, we see that we always have 
 \begin{equation}
\label{eq: U V small}
U(d) \le V(d) \le d 
 \end{equation}
for every $d \mid q$. 

We fix some $\varepsilon > 0$.
For each  positive $d \mid q$ we define the  integer $\ell(d)$   by the conditions 
$$
 2^{\ell(d)-1} U(d)  \le V(d) <  U(d) 2^{\ell(d)}
$$
and set 
$$
V_i(d) = \min\left\{ 2^i  U(d) , V(d) \right\}, \quad i = 0, \ldots, \ell(d).
$$
Hence, we derive from~\eqref{eq: EaR} that 
\begin{equation}
\label{eq: E1E2}
\EaR =  \frac{1}{q} \sum_{d \mid q} \sum_\pm\(E_1^{\pm}(d) + E_2^{\pm}(d)\),
 \end{equation}
where 
\begin{align*}
 &E_1^{\pm}(d)  =  \left|  \sum_{a \in \cI}  
\sum_{1 \le n \le  U(d)}   \tau(n)  u_d^\pm(n)  \cK_d(\mp n, a)\right|, \\
& E_2^{\pm}(d)  = 
 \sum_{i=0}^{\ell(d)-1}\left|  \sum_{a \in \cI}  
\sum_{V_i(d) \le n < V_{i+1}(d) }   \tau(n)  u_d^\pm(n)  \cK_d(\mp n, a)\right|. 
\end{align*}

For $E_1^{\pm}(d) $, from the well-known bound on the divisor function (see,
for example,~\cite[Equation~(1.81)]{IwKow}) and recalling~\eqref{eq: u size},
we have 
$$
|\tau(n)  u_d^\pm(n)| \le X^{1+\eps+o(1)} d^{-1}
$$
for $1 \le n \le U(d)$.

Using~\eqref{eq: U V small}, we see that  Lemma~\ref{lem:SnIJd} applies
with $N = U(d)$. Hence  after 
rescaling the weights, we obtain
\begin{align*}
E_1^{\pm}(d) & \le X^{1+\eps+o(1)} d^{-1}  U(d)^{3/4} \(A^{1/8}d   +A^{1/2}d^{3/4}\) \\
& = X^{1+\eps+o(1)}  U(d)^{3/4} \(A^{1/8}   +A^{1/2}d^{-1/4}\). 
\end{align*}
Thus recalling the definition of $U(d)$ in~\eqref{eq: Ud Vd}, 
we see that for any $d \mid q$ we have 
 \begin{equation}
\label{eq: E1 bound}
E_1^{\pm}(d)  \le X^{1/4+ \eps}  \(A^{1/8}q^{3/2}   +A^{1/2}q^{5/4} \)q^{o(1)}.
\end{equation}
 
To estimate $E_2^{\pm}(d)$,  we  note that by~\eqref{eq: u size} we have
$$
|\tau(n)  u_d^\pm(n)| \le
 d^{1/2} X^{1/4} V_i(d)^{-3/4} q^{o(1)}
$$
for $V_i(d) \le n < V_{i+1}(d)$.  We also recall~\eqref{eq: U V small}, hence 
by Lemma~\ref{lem:SnIJd} 
(after rescaling the weights again) 
with $N =  V_{i+1}(d)$, as in~\eqref{eq: E1 bound},
we obtain
 \begin{equation}
 \begin{split}
\label{eq: E2 bound}
E_2^{\pm}(d) & \le d^{1/2} X^{1/4}   \(A^{1/8}d   +A^{1/2}d^{3/4}\) q^{o(1)}\\
& \le X^{1/4+ \eps}  \(A^{1/8}q^{3/2}   +A^{1/2}q^{5/4} \)q^{o(1)}.
\end{split}
\end{equation}

Substituting~\eqref{eq: E1 bound} and~\eqref{eq: E2 bound}  in~\eqref{eq: E1E2} we obtain
$$
\EaR  \le X^{1/4+ \eps}  \(A^{1/8}q^{1/2}   +A^{1/2}q^{1/4} \)q^{o(1)}. 
$$
Hence, recalling~\eqref{eq: EAB Eab} and also using that  $Y/q \ge 1$, 
we obtain
$$
\EAR  \ll X^{1/4+ \eps}  \(A^{1/8}q^{3/2}   +A^{1/2}q^{5/4} \)q^{o(1)} +  AY^{1+\eps} q^{-1 + \eps}. 
$$
Using that $\eps>0$  is arbitrary and recalling the choice of $Y$ in~\eqref{eq: Y large}, we 
obtain
$$
\EAR  \le  \( A^{1/8}  X^{1/4} q^{3/2} + A^{1/2}  X^{1/4} q^{5/4}  +  A X^{1/2}   q^{-1/2 }\) q^{o(1)}, 
$$
which concludes the proof. 

\subsection{Proof of Theorem~\ref{thm:DAR-small}}

Let $U$ and $V$ be as in~\eqref{eq: UV}  and apply~\eqref{eq:RXaqComplete} with 
\begin{equation}
\label{eq:DAR-smallY}
Y=\frac{X^{1/3}p}{A^{1/3}},
\end{equation}
 to obtain
\begin{equation}
\begin{split}
\label{eq:thmDARe1}
 \cD(X;\cA,p)  &\ll \frac{1}{p}\sum_{\pm}\sum_{a\in \cA}\left|\sum_{n=1}^{\infty}\tau(n)u_p^{\pm}(n)\cK_p(\mp n,a) \right|\\
 & \qquad \qquad\qquad\qquad\qquad\qquad +A\left(\frac{Y}{p}+1 \right)(qY)^{\varepsilon} \\
&=\frac{1}{p}\left(S^++S^-\right)+A\left(\frac{Y}{p}+1 \right)(pY)^{\varepsilon},
\end{split}
\end{equation}
where 
$$
S^{\pm}=\sum_{a\in \cA}\left|\sum_{n=1}^{\infty}\tau(n)u_p^{\pm}(n)\cK_p(\mp n,a)\right|.
$$
We consider $S^-$, a similar argument applies to $S^+$. 
 With notation as in 
Lemma~\ref{lem:smoothpartition}, we have 
\begin{align*}
S^-&=\sum_{a\in \cA}\left|\sum_{m,n=1}^{\infty}u_p^{-}(mn)\cK_p(mn,a)\right| \\
& =\sum_{a\in \cA}\left|\sum_{m,n=1}^{\infty}u_p^{-}(mn)\cK_p(mn,a) \sum_{j,k=0}^{\infty}\varPsi_{j}(m)  \varPsi_{k}(n) \right|\\
&\le \sum_{j,k=0}^{\infty}\sum_{a\in \cA}\left|\sum_{m,n\in \Z}\varPsi_{j}(m)  \varPsi_{k}(n) u_p^{-}(mn)\cK_p(mn,a)\right|.
\end{align*}  
We note that we extend the summation over $m$ and $n$ to $\Z$ to be able to apply  Lemma~\ref{lem:PoissonSummationTau}
(since the functions $\varPsi_{\ell}$ are supported only on positive integers this does not change the sum). 
Hence 
$$
S^-\ll \sum_{j,k=0}^{\infty}S^-(j,k),
$$
 where 
$$
S^-(j,k)=\sum_{a\in \cA}\left|\sum_{m,n\in \Z}\varPsi_{j}(m)  \varPsi_{k}(n) u_p^{-}(mn)\cK_p(mn,a)\right|,
$$
and we recall that for each integer $\ell \ge 0,$   $\varPsi_{\ell}$ is a smooth function satisfying
\begin{equation}
\label{eq:Vellreminder}
\supp \varPsi_{\ell}\subseteq [2^{\ell-1},2^{\ell+1}] \mand  \varPsi_{\ell} (x)\ll 1.
\end{equation}

Let
\begin{equation}
\label{eq:Qdef}
Q=\max\left\{U,\frac{p}{A}\right\},
\end{equation}
 define the integer $s$ by 
$$
2^{s-1}Q\le V\le 2^{s}Q,
$$
and set 
$$
V_i=\min\{2^{i}Q,V\}, \qquad i = 0,  \ldots, s.
$$ 
Recalling~\eqref{eq:DAR-smallY}, we note that 
$$
U\le Q\le V.
$$
We partition
\begin{equation}
\label{eq:S-V}
S^-\ll \sum_{\substack{j,k\\ 2^{j+k}\le Q}} S^-(j,k)+
\sum_{i=0}^{s}\sum_{\substack{j,k=0 \\ V_i<2^{j+k}\le 2V_i}}^\infty S^-(j,k)+\sum_{\substack{j,k=0\\ 2^{j+k}\ge 2V}}^\infty S^-(j,k).
\end{equation}
By~\eqref{eq:ur3}, as in the derivation of~\eqref{eq:Rerror}, the contribution from 
the terms with $2^{j+k}\ge 2V$ is negligible, and in particular we have 
$$
\sum_{\substack{j,k=0 \\ 2^{j+k}\ge 2V}}^\infty S^-(j,k)=O(1),
$$
which after substitution to~\eqref{eq:S-V}  gives
\begin{equation}
\label{eq:Si}
S^-\ll \sum_{\substack{j,k=0 \\ 2^{j+k}\le Q}}^\infty S^-(j,k)+\sum_{i=0}^{s}\sum_{\substack{j,k=0 \\ V_i<2^{j+k}\le 2V_i}}^\infty S^-(j,k)+O(1).
\end{equation} 
We first estimate the contribution to $S^-$  from terms with $2^{j+k}\le Q$. With $U=U(q)$ as in~\eqref{eq: Ud Vd}, suppose first that $2^{j+k}\le U$. 
By~\eqref{eq: u size}, \eqref{eq:Vellreminder} 
and the Weil bound for Kloosterman sums, 
see for example~\cite[Theorem~11.11]{IwKow},  
we have 
$$
S^{-}(j,k)\ll \sum_{a\in \cA}\sum_{m,n\in \Z}| \varPsi_{j}(m) | | \varPsi_{k}(n)| u_p^{-}(mn)||\cK_p(mn,a)| \ll Ap^{3/2}X^{\varepsilon}.
$$
If $U\le 2^{j+k}\le Q$ then
\begin{align*}
S^{-}(j,k)&\ll \sum_{a\in \cA}\sum_{m,n\in \Z}| \varPsi_{j}(m) | | \varPsi_{k}(n)| |u_p^{-}(mn)||\cK_p(mn,a)| \\ & \ll 2^{(j+k)/4}AX^{1/4}p^{1+\varepsilon}\ll A Q^{1/4}X^{1/4}p^{1+\varepsilon},
\end{align*}
and hence by~\eqref{eq:Si}
\begin{equation}
\label{eq:Si1}
S^{-}\ll  Ap^{3/2+\varepsilon}(\log p)^2+Q^{1/4}AX^{1/4}p^{1+\varepsilon}+\sum_{i=0}^{s}\sum_{\substack{j,k \\ V_i<2^{j+k}\le 2V_i}} S^-(j,k).
\end{equation} 
 Fix some integer $t\le s$, some pair $(j,k)$ satisfying
\begin{equation}
\label{eq:Vellj}
V_t<2^{j+k}\le 2V_t,
\end{equation}
and consider $S^-(j,k)$. Expanding the Kloosterman sum and interchanging summation, we have 
$$
S^-(j,k)=\sum_{a\in \cA}\left|\sum_{y=1}^{p-1}\ep(ay^{-1})\sum_{m,n\in \Z}\varPsi_{j}(m)  \varPsi_{k}(n) u_p^{-}(mn)\ep(mny)\right|.
$$
Applying Lemma~\ref{lem:PoissonSummationTau} with 
\begin{equation}
\label{eq:gxydef}
g(x,y)=\varPsi_{j}(m)  \varPsi_{k}(n) u_p^{-}(mn),
\end{equation}
gives 
$$
S^{-}(j,k)=\sum_{a\in \cA}\left|\sum_{n\in \Z}\tau_h(n)\sum_{y=1}^{p-1}\ep((a-n)y^{-1})\right|,
$$
with notation as in Lemma~\ref{lem:PoissonSummationTau} and $h$ is given by
$$
h(x,y)=\frac{1}{p}\widehat g\left(\frac{x}{p},\frac{y}{p}\right).
$$
For some sequence of complex numbers $\vartheta(a)$ satisfying $|\vartheta(a)|=1$ we have 
\begin{equation}
\label{eq:S-kj}
S^-(j,k)=\sum_{y=1}^{p-1}\sum_{a\in \cA}\sum_{n\in \Z}\tau_h(n)\vartheta(a)\ep((a-n)y^{-1}),
\end{equation}
and hence 
$$
S^-(j,k)=p\sum_{\substack{a\in \cA, n\in \Z \\ n\equiv a \pmod p}}\vartheta(a)\tau_h(n)-\sum_{a\in \cA}\vartheta(a)\sum_{n\in \Z}\tau_h(n).
$$
Since $0\not \in \cA$ and $p$ is prime, using~\eqref{eq:orth} to control  
the condition $n\equiv a \pmod p$ via multiplicative characters, we have 
$$
\sum_{\substack{a\in \cA, n\in \Z \\ n\equiv a \pmod p}}\vartheta(a)\tau_h(n)=\frac{1}{p-1}\sum_{\chi}\left(\sum_{a\in \cA}\vartheta(a)\overline \chi(a) \right)\left(\sum_{n\in \Z}\tau_h(n)\chi(n) \right),
$$
and isolating the contribution from the trivial character gives
\begin{align*}
& \sum_{\substack{a\in \cA, n\in \Z \\ n\equiv a \pmod p}}\vartheta(a)\tau_h(n)=\frac{1}{p-1}\sum_{\substack{\chi\neq \chi_0}}\left(\sum_{a\in \cA}\vartheta(a)\overline \chi(a) \right)\left(\sum_{n\in \Z}\tau_h(n)\chi(n) \right) \\ 
& \quad \quad \quad \quad +\frac{1}{p-1}\sum_{a\in \cA}\vartheta(a)\sum_{\substack{n\in \Z}}\tau_h(n)-\frac{1}{p-1}\sum_{a\in \cA}\vartheta(a)\sum_{\substack{n\in \Z \\ n\equiv 0 \pmod{p}}}\tau_h(n).
\end{align*}
Hence by~\eqref{eq:S-kj}
\begin{equation}
\label{eq:S-jk1}
S^{-}(j,k)\ll \Sigma_1+\frac{1}{p}\Sigma_2+\Sigma_3,
\end{equation}
where 
\begin{align*}
& \Sigma_1=\sum_{\substack{\chi\neq \chi_0}}\left(\sum_{a\in \cA}\vartheta(a)\overline \chi(a) \right)\left(\sum_{n\in \Z}\tau_h(n)\chi(n) \right),\\
& \Sigma_2=\sum_{a\in \cA}\vartheta(a)\sum_{n\in \Z}\tau_h(n),\\
& \Sigma_3=\sum_{a\in \cA}\vartheta(a)\sum_{\substack{n\in \Z \\ n\equiv 0 \pmod{p}}}\tau_h(n).
\end{align*}
Considering first $\Sigma_2$, we have 
$$
\sum_{n\in \Z}\tau_h(n)=\frac{1}{p}\sum_{m,n\in \Z}\widehat g\left(\frac{m}{p},\frac{n}{p}\right).
$$
With $g$ as in~\eqref{eq:gxydef} we have 
\begin{equation}
\label{eq:Poisson-p}
\sum_{m,n\in \Z}\widehat g\left(\frac{m}{p},\frac{n}{p}\right)=p^2\sum_{m,n\in \Z}g(pm,pn) . 
\end{equation} \
Indeed, to see this it is enough to notice that for $G(x,y) = g(px, py)$
we have 
\begin{align*}
\widehat G\left(u,z\right) &=\int_{\R^2}G\left(x,y\right)\e(ux +zy )dxdy \\
& =  \frac{1}{p^2}  \int_{\R^2}g\left(px,py\right)\e((u/p) (px) +(v/p)(py) )d(px)d(py)\\
& =  \frac{1}{p^2}  \int_{\R^2}g\left(x,y\right)\e((u/p) x +(v/p) y )dxdy =
\ \frac{1}{p^2} \widehat g\left(\frac{m}{p},\frac{n}{p}\right), 
\end{align*}
and we obtain~\eqref{eq:Poisson-p} by Poission summation.  Hence, from the equations~\eqref{eq:gxydef} and~\eqref{eq:Poisson-p}, we see that
$$
\sum_{n\in \Z}\tau_h(n)=p\sum_{m,n\in \Z}\Psi_j(pm)\Psi_k(pn)u_p^{-}(p^2mn).
$$
By~\eqref{eq: u size} and~\eqref{eq:Vellj} 
\begin{align*}
\sum_{m,n\in \Z}|\Psi_j(pm)||\Psi_k(pn)| &| u_p^{-}(p^2mn)|\\
&\ll \sum_{2^{j-1}/p\le m\le 2^{j+1}/p}\sum_{2^{k-1}/p\le m\le 2^{k+1}/p}|u_p^{-}(p^2mn) |\\ & \ll \frac{2^{j+k}}{p^2}\frac{X^{1/4}p^{1/2}}{2^{3(i+j)/4}}\ll  X^{1/4}V_{t}^{1/4} p^{-3/2},
\end{align*}
so that 
\begin{equation}
\label{eq:tauhn}
\sum_{n\in \Z}\tau_h(n)\ll  X^{1/4}V_{t}^{1/4}p^{-1/2},
\end{equation}
which implies
\begin{equation}
\label{eq:Sigma2b}
\Sigma_2\ll  AX^{1/4}V_{t}^{1/4}p^{-1/2}.
\end{equation}
Considering $\Sigma_3$, by~\eqref{eq:tauhn} we have 
\begin{equation}
\begin{split}
\label{eq:Sigma31}
 \sum_{\substack{n\in \Z \\ n\equiv 0 \pmod{p}}}&\tau_h(n)\\
 &=\frac{1}{p}\sum_{z=1}^{p-1}\sum_{n\in \Z} \tau_h(n)\ep(zn)+\frac{1}{p}\sum_{n\in \Z}\tau_h(n) \\
&=\frac{1}{p}\sum_{z=1}^{p-1}\sum_{n\in \Z}\tau_h(n)\ep(zn)+O\(X^{1/4}V_{t}^{1/4} p^{-3/2}\).
\end{split}
\end{equation}
By Lemma~\ref{lem:PoissonSummationTau} and~\eqref{eq:gxydef}
$$
\sum_{n\in \Z}\tau_h(n)\ep(zn)=\sum_{n\in \Z}\tau_g(n)\ep(-z^{-1}n),
$$
so that
\begin{align*}
\sum_{z=1}^{p-1}\sum_{n\in \Z}\tau_h(n)\ep(zn)&=\sum_{n\in \Z}\tau_g(n)\sum_{z=1}^{p-1}\ep(-z^{-1}n) \\ & \ll p\sum_{\substack{n\in \Z \\ n\equiv 0 \pmod{p}}}\tau_g(n)+\sum_{n\in \Z}\tau_g(n).
\end{align*}
We have 
\begin{align*}
\sum_{\substack{n\in \Z \\ n\equiv 0 \pmod{p}}}\tau_g(n)& \ll \sum_{n,m\in \Z}
| \varPsi_{j}(pm) | | \varPsi_{k}(n)| |u_p^{-}(pmn)|
\\ & \qquad \quad +\sum_{n,m\in \Z}
| \varPsi_{j}(m) | | \varPsi_{k}(pn)| |u_p^{-}(pmn)|  . 
\end{align*}
For the first sum above
\begin{align*}
\sum_{n,m\in \Z}\Psi_j(pm)\Psi_k(n)u_p^{-}(pmn)&\ll \sum_{2^{j-1}/p\le m\le 2^{j+1}/p}\sum_{2^{k-1}\le n\le 2^{k+1}}|u_p^{-}(pmn)| \\
& \ll \frac{2^{j+k}}{p}\frac{X^{1/4}p^{1/2}}{2^{3(i+j)/4}}\ll X^{1/4}V_t^{1/4} p^{-1/2},
\end{align*}
and similarly for the second sum
$$
\sum_{n,m\in \Z}
| \varPsi_{j}(m) | | \varPsi_{k}(pn)| |u_p^{-}(pmn)|  \ll  X^{1/4} V_t^{1/4}p^{-1/2} .
$$
A similar argument shows 
$$
\sum_{n\in \Z}\tau_g(n)\ll X^{1/4}V_t^{1/4} p^{1/2},
$$
so that 
$$
\sum_{z=1}^{p-1}\sum_{n\in \Z}\tau_h(n)\ep(zn)\ll X^{1/4}p^{1/2}V_t^{1/4}.
$$
Hence by~\eqref{eq:Sigma31}
\begin{equation}
\label{eq:Sigma3b}
\Sigma_3\ll AX^{1/4}V_t^{1/4}p^{-1/2}.
\end{equation}

It now remains to estimate $\Sigma_1$. B Lemma~\ref{lem:PoissonSummationTauMult} we have
\begin{equation}
\label{eq:Sigma1step1}
\Sigma_1\le \sum_{\chi\neq \chi_0}\left|\sum_{a\in \cA}\vartheta(a)\overline \chi(a)\right|\left|\sum_{n\in \Z}\tau_g(n)\overline \chi(-n) \right|,
\end{equation}
and hence by the Cauchy-Schwarz inequality 
\begin{align*}
\Sigma_1^2&\le \sum_{\chi}\left|\sum_{a\in \cA}\vartheta(a)\overline \chi(a)\right|^2\sum_{\chi}\left|\sum_{n\in \Z}\tau_g(n)\overline \chi(-n) \right|^2 \\
&\le p^2A\sum_{\substack{m,n\in \Z \\ \gcd(mn,p)=1 \\ n\equiv m \pmod{p}}}\tau_g(m)\tau_g(n).
\end{align*}
We have 
$$
\sum_{\substack{m,n\in \Z \\ \gcd(mn,p)=1 \\ n\equiv m \pmod{p}}}\tau_g(m)\tau_g(n)\ll \sum_{\substack{ 2^{j-1}\le m_1,m_2\le 2^{j+1} \\ 2^{k-1} \le n_1,n_2\le 2^{k+1} \\  n_1m_1\equiv n_2m_2 \pmod{p} \\ \gcd(m_1   m_2 n_1 n_2,p)=1}}|u_p^{-}(m_1n_1)||u^{-}_p(m_2n_2)|.
$$
By~\eqref{eq: u size} and Lemma~\ref{lem:multcong}
\begin{align*}
\sum_{\substack{m,n\in \Z \\ \gcd(mn,p)=1 \\ n\equiv m \pmod{p}}}\tau_g(m)\tau_g(n)&\le \frac{X^{1/2}p}{2^{3(i+j)/2}}\left(\frac{2^{2(i+j)}}{p}+2^{i+j}\right)p^{o(1)}  \\ &
\le \left(X^{1/2}V_t^{1/2}+\frac{X^{1/2}p}{V_t^{1/2}}\right)p^{o(1)},
\end{align*}
and hence by~\eqref{eq:Sigma1step1}
$$
\Sigma_1\le  A^{1/2}\left(X^{1/4}V_t^{1/4}+\frac{X^{1/4}p^{1/2}}{V_t^{1/4}}\right)p^{1+o(1)}.
$$

Since $A<p$, from~\eqref{eq:S-jk1},~\eqref{eq:Sigma2b} and~\eqref{eq:Sigma3b}
$$
S^{-}(j,k)\le A^{1/2}\left(X^{1/4}V_t^{1/4}+\frac{X^{1/4}p^{1/2}}{V_t^{1/4}}\right)p^{1+o(1)},
$$
and hence by~\eqref{eq:Si1} 
\begin{align*}
S^{-}&\ll  Ap^{3/2+\varepsilon+o(1)}+Q^{1/4}AX^{1/4}p^{1+\varepsilon}  \\ & \qquad \qquad  +\sum_{i=0}^{s}\sum_{\substack{j,k \\ V_i<2^{j+k}\le 2V_i}} A^{1/2}\left(X^{1/4}V_t^{1/4}+\frac{X^{1/4}p^{1/2}}{V_t^{1/4}}\right)p^{1+o(1)}.
\end{align*} 
 Since the above sum   contains $O((\log X)^3)$ terms, in the negative powers of $V_t$ we replace $V_t$ 
 with its smallest possible value $Q$, while in  the positive  powers of $V_t$ we replace $V_t$ 
 with its largest  possible value $V$ and derive
\begin{align*}
S^{-}\ll Ap^{3/2+\varepsilon+o(1)}+Q^{1/4}AX^{1/4}p^{1+\varepsilon}+A^{1/2}X^{1/4}&V^{1/2}p^{1+o(1)} \\ &  +\frac{A^{1/2}X^{1/4}}{Q^{1/4} p^{3/2+o(1)}}.
\end{align*}
Recalling~\eqref{eq:Qdef}  we see that $Q\ge p/A$ thus 
$$
\frac{A^{1/2}X^{1/4}}{Q^{1/4} p^{3/2}}  \le \frac{A^{1/2}X^{1/4}p^{3/2+o(1)}}{ (p/A)^{1/4}}
= A^{3/4}X^{1/4}p^{5/4} .
$$
Moreover
\begin{align*}
Q^{1/4}AX^{1/4}p \le U^{1/4}AX^{1/4}p + A^{3/4}X^{1/4}p^{5/4} = Ap^{3/2}   + A^{3/4}X^{1/4}p^{5/4}. 
\end{align*}  
Therefore, we obtain 
$$
S^-\ll Ap^{3/2+\varepsilon+o(1)}+A^{3/4}X^{1/4}p^{5/4+\varepsilon}+A^{1/2}X^{1/4}V^{1/4}p^{1+o(1)}.
$$
A similar estimate holds for $S^{+}$ and hence by~\eqref{eq: Ud Vd} and~\eqref{eq:thmDARe1}  
\begin{align*}
 \cD(X;\cA,p) \ll Ap^{1/2+\varepsilon+o(1)}+A^{3/4}X^{1/4}p^{1/4+\varepsilon}& +\frac{A^{1/2}X^{1/2}p^{1/2+o(1)}}{Y^{1/2}} \\ &   +A\left(\frac{Y}{p}+1 \right)(pY)^{\varepsilon},
\end{align*}
which by~\eqref{eq:DAR-smallY} implies
$$
 \cD(X;\cA,p) \le \left(Ap^{1/2}+A^{3/4}X^{1/4}p^{1/4}+A^{2/3}X^{1/3}\right)(pX)^{\varepsilon+o(1)}.
$$
Since $\varepsilon>0$ is arbitrary, we obtain
$$
 \cD(X;\cA,p) \le \left(Ap^{1/2}+A^{3/4}X^{1/4}p^{1/4}+A^{2/3}X^{1/3}\right)X^{o(1)}.
$$
Clearly, if $p> X/A$ them $Ap^{1/2} > A^{1/2} X^{1/2}$ and thus the bound~\eqref{eq: Cauchy} is stronger.
On the other hand, for $p \le X/A$ we have $Ap^{1/2} \le A^{3/4}X^{1/4}p^{1/4}$, which 
this completes the proof.

\section{Remarks}

For almost all $q$ a stronger version of 
Lemma~\ref{lem:SnIJd} has also been given in~\cite{Shp}.
In turn, this can be used to improve Theorem~\ref{thm:EAR}
for almost all moduli $q$. In fact, limiting this set of moduli to 
only prime $q=p$ simplifies this question significantly. For composite 
values of $q$ one also has to eliminate $q$ having an ``undesirable'' 
divisor $d \mid q$, yet there is little doubt that this can be done. 

It is also interesting to extend Theorem~\ref{thm:DAR} to arbitrary 
integer moduli $q$. Unfortunately, the analogues of the bounds 
on bilinear sums $\SanIJd$ from~\cite{BFKMM} are known only for prime $q$,
while the method of~\cite{Shp,ShpZha} seems to work only for the sums
$\SnIJd$.

\section*{Acknowledgement}

This work  was partially supported by the  Australian Research Council 
Grant DP170100786.

\end{document}